\numberwithin{equation}{section}
\newcommand  \tmop[1]    {{\ensuremath{\operatorname{#1}}}}
\newcommand  \stack[2]   {\overset{(\text{#1})}{#2}}
\newcommand{\E}     {\tmop{E}}
\def \x {\boldsymbol{x}}
\def \minimize  {\operatorname*{minimize}}
\def \argmin    {\operatorname*{argmin}}
\def \st        {\operatorname*{subject\ to\ }}
\def \R {\mathbb{R}}
\def \C {\mathbb{C}}
\def \O {\mathbb{O}}
\def \eye       {\mathbf{I}}
\def \zero      {\mathbf{0}}
\def \lg        {\langle}
\def \rg        {\rangle}
\def \tr        {\tmop{trace}}
\def \rank      {\tmop{rank}}
\def \range     {\tmop{Range}}
\newtheorem{proposition}{Proposition}
\newtheorem{lem}{Lemma}
\newtheorem{theorem}{Theorem}
\newtheorem{definition}{Definition}
\theoremstyle{remark}
\newtheorem{remark}{Remark}
\def \Poff {\mathcal{P_{\tmop{off}}}}
\def \Pon  {\mathcal{P_{\tmop{on}}}}
\def \dist {\tmop{d}}
\def \wh   {\widehat}
\def \PP   {\mathbb{P}}
\def \X    {\mathcal{X}}
\def \E    {\mathcal{E}}
\begin{document}
\title{Geometry of Factored Nuclear Norm Regularization}
 \author{\normalsize Qiuwei Li, Zhihui Zhu and Gongguo Tang\thanks{ This work was supported by NSF grants CCF-1464205, CCF-1409261. Email: \{qiuli,gtang,zzhu\}@mines.edu}\\
\normalsize Department of Electrical Engineering and Computer Science, \\
\normalsize Colorado School of Mines,
Golden, CO 80401, USA
%Email: qiuli@mymail.mines.edu, gtang@mines.edu
%\thanks{ Approved for public release by WPAFB public affairs on 15 Jul 2015. Case number 88ABW-2015-3608.}
}

\date{\normalsize \today}
\maketitle

\begin{abstract}
This work investigates the geometry of a nonconvex reformulation of minimizing a general convex loss function $f(X)$ regularized by the matrix nuclear norm $\|X\|_*$.
Nuclear-norm regularized matrix inverse problems are at the heart of many applications in  machine learning, signal processing, and control. The statistical performance of nuclear norm regularization has been studied extensively in literature using convex analysis techniques.
Despite its optimal performance, the resulting optimization has high computational complexity when solved using standard or even tailored fast convex solvers.
To develop faster and more scalable algorithms, we follow the proposal of Burer-Monteiro to factor  the matrix variable $X$ into the product of two smaller rectangular matrices $X=UV^T$ and also replace the nuclear norm $\|X\|_*$ with $(\|U\|_F^2+\|V\|_F^2)/2$. In spite of the nonconvexity of the factored formulation, we prove that when the convex loss function $f(X)$ is $(2r,4r)$-restricted well-conditioned, each critical point of the factored problem either corresponds to the optimal solution $X^\star$ of the original convex optimization or is a strict saddle point where the Hessian matrix  has a strictly negative eigenvalue. Such a geometric structure of the factored formulation allows many local search algorithms to converge to the global optimum with random initializations.
\end{abstract}

\section{Introduction} \label{sec:introduction}
Nuclear-norm regularized inverse problems arise in many applications in machine learning \cite{harchaoui2012large}, signal processing \cite{bouwmans2016handbook}, and control \cite{mohan2010reweighted}.
In this work, we consider a general nuclear-norm regularized optimization:
\begin{align}\label{eqn:origin}
\minimize_{X\in\R^{p\times q}} f(X)+\lambda\|X\|_*.
\end{align}
Here $f(X)$ is a general convex loss function, $\|X\|_*$ denotes the matrix nuclear norm of $X$, and $\lambda$ is a trade-off parameter.
The statistical performance has been studied extensively in literature using convex analysis techniques \cite{candes2009exact}, for example, information-theoretically optimal sampling complexity \cite{candes2010power},  minimax denoising rate \cite{candes2010matrix}, and tight oracle inequalities \cite{candes2011tight}. In spite of its optimal performance, improving computational efficiency for \eqref{eqn:origin} remains a challenge. Even fast first-order methods, such as the projected gradient descent  algorithm \cite{boyd2004convex,chen2015fast}, require an expensive singular value decomposition in each iteration, forming the major computational bottleneck of the algorithms and thus preventing them from scaling to big-data applications \cite{decoste2006collaborative}.

\subsection{Our Approach: Burer-Monteiro Parameterization}
To overcome the computational challenges, we utilize the Burer-Monteiro parameterization that is recognized as an alternative to convex solvers in \cite{burer2003nonlinear}. More precisely, when \eqref{eqn:origin} admits a solution $X^\star$ with rank $r^\star$,  the matrix variable is decomposed as the product of two smaller matrices:
\begin{align}\label{eqn:bilinear}
X=\phi(U,V):=U V^{ T},
\end{align}
where $U\in\R^{p\times r}$ and $V\in\R^{q\times r}$ with $r\ll \min\{p,q\}$ and $r \geq r^\star$. Moreover, using the fact that \cite[page 21]{recht2010guaranteed}
\begin{align}\label{eqn:nuclear:nvx}
\|X\|_*=\minimize_{X=\phi(U,V)}\ \Theta(U,V),
\end{align}
where $\Theta(U,V)=\frac{1}{2}(\|U\|_F^2+\|V\|_F^2)$, we replace the matrix nuclear norm $\|X\|_*$ with $\Theta(U,V)$ to obtain a factored formulation of the original convex optimization \eqref{eqn:origin}:
\begin{align}\label{eqn:factored}
\hspace{-0.4cm} \minimize_{U\in\R^{p\times r},V\in\R^{q\times r}} g(U,V):=f(\phi(U,V))+ \lambda \Theta(U,V).
\end{align}
For simplicity, $g(U,V)$ in \eqref{eqn:factored} is also represented by $g(W)$ with $W=[U^T~V^T]^T$.
The factored formulation \eqref{eqn:factored}, coupled with local search algorithms such as gradient descent and its variants, reduces computational complexity by avoiding  expensive SVDs and  decreasing the number
of optimization variables from $pq$ to $(p+q)r$, a significant reduction when $r\ll \min\{p,q\}$. Such an increase in  computational efficiency makes it possible to handle problems with millions of variables.

Although the Burer-Monteiro factored reformulation \eqref{eqn:factored}  allows faster implementations,
the theoretical performance guarantees for the nuclear-norm regularization are not compete. In this work, we adopt the geometric approach \cite{ge2015escaping,ge2016matrix,sun2015complete,sun2016geometric, zhu2017globalb,zhu2017globala,jin2017escape,Ge2017local} to analyze the landscape of \eqref{eqn:factored} and show that there is no spurious local minima or degenerate saddle points for \eqref{eqn:factored} when the objective function $f(X)$ is well-conditioned.
An important implication is that local search algorithms, such as gradient descent and its variants, are able to converge to the global optima even with random initialization \cite{ge2015escaping,ge2016matrix}.
Moreover, once we establish the equivalence between the convex and the factored formulations, it is unnecessary to rederive the statistical performances of the factored optimization \eqref{eqn:factored}, since they inherit from that of the convex optimization \eqref{eqn:origin}.
%The past two years have seen renewed interest in the Burer-Monteiro factorization for solving trace/nuclear norm regularized inverse problems \cite{bhojanapalli2015dropping,li2016,bhojanapalli2016lowrankrecoveryl}. By analyzing the nonconvex landscape of the factored objective function, several recent works have shown that with exact parameterization ({i.e.}, $r = r^\star$)  the factored objective function will have no spurious local minima or degenerate saddle points \cite{bhojanapalli2015dropping,ge2016matrix}.
%%%%%%%%%%%%%%%
%We generalize this line of work by considering a general loss function $f(X)$ regularized by the matrix nuclear norm $\|X||_*$. By viewing the factored problem \eqref{eqn:factored} as a way to solve the convex optimization \eqref{eqn:origin} to the global optimum, we need not rederive the statistical performances of the factored optimization \eqref{eqn:factored}, which  indeed inherit from that of the convex optimization \eqref{eqn:origin}, whose performance can be have been well-understood using the powerful convex analysis techniques. For example, the optimal sampling complexity of nuclear-norm regularization needs not to be rederived once one knows the equivalence between the convex and the factored formulations.

\subsection{Main Result}
Before presenting our main result, we provide several necessary definitions. We call a vector $\x$ a \emph{critical point} of some differentiable function $\psi(\cdot)$ if the gradient $\nabla \psi(\x)=\zero.$ When $\psi(\cdot)$ is twice continuously differentiable, a critical point $\x$ is called a \emph{strict saddle} or \emph{riddable saddle} \cite{sun2015nonconvex} if the Hessian has a strictly negative eigenvalue, {i.e.,} $\lambda_{\min}(\nabla^2 \psi(\x))<0$. A twice continuously differentiable function satisfies the \emph{strict saddle property} if every critical point is either a local minimum or is a strict saddle \cite{ge2016matrix}.

Heuristically, the strict saddle property describes a geometric structure of the landscape: every non-local-minimum critical point is a strict saddle, where the Hessian has a strictly negative eigenvalue.
This property ensures that many local search algorithms, such as noisy gradient descent \cite{ge2015escaping} and the trust region method \cite{sun2016geometric},
can escape from all the saddles along the directions associated with the Hessian's negative eigenvalues, and hence converge to a local minimum:
\begin{theorem}[Escaping from saddles \cite{ge2015escaping,sun2015nonconvex,lee2016gradient}]\label{thm:informal}
(informal)
The strict saddle property allows many local search algorithms, such as noisy gradient descent and the trust region method, to converge to a local minimum.
\end{theorem}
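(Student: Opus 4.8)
The plan is to treat this statement as a synthesis of existing convergence guarantees for saddle-escaping algorithms, recall the common structure of those arguments, and indicate how the strict saddle property enters each one. The template I would follow partitions the domain into three regions according to the behavior of $\psi$: (i) points where $\|\nabla\psi(\x)\|$ is bounded below by a positive constant, (ii) points within distance $\delta$ of a critical point at which $\lambda_{\min}(\nabla^2\psi(\x))$ is at most a fixed negative constant, and (iii) points within distance $\delta$ of a local minimum. The strict saddle property is exactly what guarantees that these three regions cover the whole domain for suitably chosen thresholds. I would then argue that each iteration makes a definite amount of progress — in function value, or in distance to the second-order stationary set — whenever the current iterate lies in region (i) or (ii), so the iterates cannot stall anywhere except near a local minimum.

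For noisy gradient descent, the argument in region (i) is the standard descent lemma: one step decreases $\psi$ by about $\eta\|\nabla\psi\|^2/2$ provided $\eta$ is below the inverse gradient-Lipschitz constant. The delicate part is region (ii): there I would use the negative Hessian eigenvalue to single out an unstable direction, show that the injected isotropic noise gives the iterate a non-negligible component along that direction with high probability, and then prove that this component grows geometrically under the approximately linearized dynamics until $\psi$ has dropped by a fixed amount. A union bound over the polynomially many epochs needed to escape, together with the accumulated per-epoch decreases, forces the iterates eventually into and then near a local minimum.

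For the trust-region method the mechanism is cleaner: at a point with $\lambda_{\min}(\nabla^2\psi(\x))<0$ the trust-region subproblem solution exploits the negative-curvature direction and yields a decrease in the quadratic model — hence, by the usual sufficient-decrease analysis, in $\psi$ — that is bounded below in terms of $|\lambda_{\min}|$ and the trust-region radius. Classical trust-region theory then gives convergence to a point with $\nabla\psi=\zero$ and $\nabla^2\psi\succeq 0$, and the strict saddle property rules out every such point but a local minimum. A third route, for plain gradient descent from a random start, uses the stable-manifold theorem: the set of initializations whose gradient flow converges to a given strict saddle forms a measure-zero stable manifold, so almost every initialization avoids all strict saddles and, by the same region-(i)/region-(iii) dichotomy, converges to a local minimum.

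The hard part will not be conceptual but the quantitative bookkeeping — which is presumably why the statement is labeled informal. I would need to fix precise regularity hypotheses (a globally Lipschitz gradient and Lipschitz Hessian, plus coercivity of $\psi$ to keep the sublevel sets bounded), choose the region thresholds consistently with one another, and control the escape time from region (ii) carefully enough that the high-probability guarantees survive the union bound over all iterations. Matching these constants to the specific landscape of $g(U,V)$, developed in the later sections, is what turns this generic escaping principle into a concrete complexity bound for the factored nuclear-norm problem.
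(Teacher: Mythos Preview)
The paper does not prove this theorem at all: it is stated as an informal background result and attributed entirely to the cited references \cite{ge2015escaping,sun2015nonconvex,lee2016gradient}, with no accompanying proof or even proof sketch. Your proposal is therefore not comparable to ``the paper's own proof'' because there is none; what you have written is a faithful synthesis of the arguments in those cited works (the three-region decomposition and noise-driven escape from \cite{ge2015escaping}, the trust-region negative-curvature step from \cite{sun2015nonconvex,sun2016geometric}, and the stable-manifold measure-zero argument from \cite{lee2016gradient}), which is exactly the right way to justify an imported, informally stated result of this kind.
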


Our main result builds on the assumption that the convex loss function $f(X)$ is $(2r,4r)$-restricted well-conditioned:
\begin{align}
&m\|D\|_F^2\leq [\nabla^2 f(X)](D,D)\leq M \|D\|_F^2,\ M/m\leq1.5 \text{ if } \tmop{rank}({X})\leq 2r\text{ and }\rank(D)\leq 4r.  \label{eqn:assumption}
\end{align}
We note that the assumption \eqref{eqn:assumption} is standard in matrix inverse problems \cite{bhojanapalli2015dropping}.
The main contribution of this work is establishing that under the restricted well-conditionedness of the convex loss function, the factored optimization \eqref{eqn:factored} has no spurious local minima and satisfies the strict saddle property.
\begin{theorem}[Strict saddle property]\label{thm:main}
Suppose the function $f(X)$ is  twice continuously differentiable and satisfies the $(2r,4r)$-restricted well-conditioned property \eqref{eqn:assumption}.
Assume $X^\star$ is an optimal solution of the optimization \eqref{eqn:origin} with $\rank(X^\star)= r^\star$. Set $r\geq r^\star$ in \eqref{eqn:factored}.
Let ${(U,V)}$ be any critical point of $g(U,V)$ satisfying $\nabla g(U,V)=\zero$. Then $(U,V)$  either corresponds to a  factorization  of  $X^\star$, {i.e.},
\begin{align}\label{eqn:global0}
X^\star=UV^T;
\end{align}
or is a strict saddle of the factored problem  \eqref{eqn:factored}. More precisely, denote $W=[U^T ~V^T]^T$. Then
\begin{align*}
&\lambda_{\min}(\nabla^2g(W))\leq
 \begin{cases}
-0.12m\min\{0.5\rho(W)^2,\rho(X^\star)\}, & \text{ if $r\geq r^\star$};\\
-0.099m\rho(X^\star), & \text{ if $r= r^\star$};\\
-0.12m\rho(X^\star), &  \text{ if $W= \zero$}.
\end{cases}
\end{align*}
Here $\rho(W)$ denotes $W$'s smallest nonzero singular value.
\end{theorem}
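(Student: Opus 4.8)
The plan is to run the standard programme for Burer--Monteiro landscape theorems: differentiate $g$, read off the structure forced by first-order criticality, and for every critical point that does not factor $X^\star$ write down an explicit direction of negative curvature realizing the claimed bound. Set $X=UV^T$ and $\Delta=[\Delta_U^T~\Delta_V^T]^T$; then $\nabla_U g=\nabla f(X)V+\lambda U$, $\nabla_V g=\nabla f(X)^TU+\lambda V$, and
\begin{align*}
[\nabla^2 g(W)](\Delta,\Delta)&=[\nabla^2 f(X)]\big(\Delta_U V^T+U\Delta_V^T,\ \Delta_U V^T+U\Delta_V^T\big)\\
&\quad+2\big\langle\nabla f(X),\Delta_U\Delta_V^T\big\rangle+\lambda\big(\|\Delta_U\|_F^2+\|\Delta_V\|_F^2\big).
\end{align*}
Imposing $\nabla g(U,V)=\zero$ and right-multiplying the two stationarity equations by $V^T$ and $U^T$ yields the balance identity $U^TU=V^TV$; hence $X=\bar U\Lambda\bar V^T$ is a genuine reduced SVD ($\bar U,\bar V$ with orthonormal columns, $\Lambda\succ 0$), the stationarity equations sharpen to $\nabla f(X)\bar V=-\lambda\bar U$ and $\nabla f(X)^T\bar U=-\lambda\bar V$, and writing $\nabla f(X)$ in the bases adapted to $\range(\bar U)\oplus\range(\bar U)^\perp$ and $\range(\bar V)\oplus\range(\bar V)^\perp$ shows it is block diagonal:
\begin{align*}
\nabla f(X)=-\lambda\,\bar U\bar V^T+D,\qquad D:=(\eye-\bar U\bar U^T)\,\nabla f(X)\,(\eye-\bar V\bar V^T).
\end{align*}

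This already gives the dichotomy. Since $D$ annihilates $\bar U$ on the left and $\bar V$ on the right, $-\tfrac{1}{\lambda}\nabla f(X)=\bar U\bar V^T-\tfrac{1}{\lambda}D$ belongs to $\partial\|X\|_*$ if and only if $\|D\|_{\mathrm{op}}\le\lambda$, and this is exactly the first-order optimality condition for the convex problem \eqref{eqn:origin} at $X$. Restricted strong convexity of $f$ on rank-$\le 2r$ matrices makes the rank-$\le r$ minimizer of \eqref{eqn:origin} unique; since $\rank(X^\star)=r^\star\le r$, either $X=X^\star$ --- the first alternative of the theorem, $X^\star=UV^T$ --- or $X$ is not optimal, and then $\|D\|_{\mathrm{op}}>\lambda$.

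Assume $X\ne X^\star$, let $\hat u,\hat v$ be leading left and right singular vectors of $D$ (so $\hat u\perp\range(\bar U)$, $\hat v\perp\range(\bar V)$ and $\hat u^T\nabla f(X)\hat v=\|D\|_{\mathrm{op}}$), let $x\in\R^r$ be a unit vector attaining $\lambda_{\min}(\Gamma)$ with $\Gamma:=U^TU=V^TV$ (equivalently $x\in\Null(U)=\Null(V)$ when $W$ is column-rank deficient), and take $\Delta_U=\hat u x^T$, $\Delta_V=-\hat v x^T$. Then $\Delta_U\Delta_V^T=-\hat u\hat v^T$, $\|\Delta\|_F^2=2$, and $\|\Delta_U V^T+U\Delta_V^T\|_F^2=2\,x^T\Gamma x\le\rho(W)^2$; substituting into the Hessian and using the upper curvature bound in \eqref{eqn:assumption},
\begin{align*}
\lambda_{\min}(\nabla^2 g(W))\ \le\ \tfrac{1}{2}[\nabla^2 g(W)](\Delta,\Delta)\ \le\ \tfrac{1}{2} M\,\rho(W)^2-\big(\|D\|_{\mathrm{op}}-\lambda\big).
\end{align*}
The degenerate case $W=\zero$ is handled separately: testing $\Delta=(U_0^\star,V_0^\star)$ for a balanced factorization $X^\star=U_0^\star V_0^{\star T}$ kills the first-order term, gives $\Delta_U\Delta_V^T=X^\star$, and convexity together with optimality of $X^\star$ (which yield $\langle\nabla f(\zero),X^\star\rangle\le-\lambda\|X^\star\|_*-\tfrac{1}{2}m\|X^\star\|_F^2$) leave $\lambda_{\min}(\nabla^2 g(\zero))\le-\tfrac{1}{2}m\,\|X^\star\|_F^2/\|X^\star\|_*\le-\tfrac{1}{2}m\,\rho(X^\star)$, stronger than the claimed $-0.12m\rho(X^\star)$.

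The remaining step --- which I expect to be the main obstacle --- is to bound the non-optimality gap $\|D\|_{\mathrm{op}}-\lambda$ below by a quantity comparable to $\min\{\tfrac{1}{2}\rho(W)^2,\rho(X^\star)\}$, with enough room that the leakage $\tfrac{1}{2}M\rho(W)^2\le\tfrac{3}{4}m\rho(W)^2$ in the previous display is absorbed (and, when $r=r^\star$, to track the improved constant $0.099$). I would write $\nabla f(X)-\nabla f(X^\star)=\big(\int_0^1\nabla^2 f(X^\star+\tau(X-X^\star))\,d\tau\big)(X-X^\star)$ and $\nabla f(X^\star)=-\lambda Z^\star$ with $Z^\star\in\partial\|X^\star\|_*$, so that the complementary block $D$ equals $-\lambda(\eye-\bar U\bar U^T)Z^\star(\eye-\bar V\bar V^T)$ --- of spectral norm at most $\lambda$ --- plus the complementary block of the second-order remainder, which is driven by the part of $X^\star$ outside $\range(\bar U)\times\range(\bar V)$ since the part of $X$ there vanishes. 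At a critical point this misalignment cannot be negligible: if the column and row spaces of $X$ and $X^\star$ coincided, criticality would give $(\nabla f(X)-\nabla f(X^\star))\bar V=\zero$, hence $\langle X-X^\star,\int_0^1\nabla^2 f(\cdot)(X-X^\star)\,d\tau\rangle=0$, which contradicts the lower bound $m\|X-X^\star\|_F^2$ in \eqref{eqn:assumption} unless $X=X^\star$. Quantifying this obstruction --- extracting $\|D\|_{\mathrm{op}}-\lambda\gtrsim m\min\{\tfrac{1}{2}\rho(W)^2,\rho(X^\star)\}$ from $(2r,4r)$-restricted well-conditionedness alone and carrying the constants through the $M/m\le 1.5$ slack, which is precisely what makes the bookkeeping close --- is the crux of the proof.
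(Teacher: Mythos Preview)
Your structural observations are correct and useful: the balance $U^TU=V^TV$, the block form $\nabla f(X)=-\lambda\bar U\bar V^T+D$ with $D$ living on the orthogonal complements, and the KKT dichotomy ($X=X^\star$ iff $\|D\|_{\mathrm{op}}\le\lambda$, uniqueness following from restricted strong convexity along the rank-$\le 2r$ segment). Your treatment of $W=\zero$ is also fine and in fact yields a better constant than the statement requires.

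However, your main line of attack diverges from the paper's and, as you yourself flag, leaves the decisive inequality unproved. You test the Hessian along a rank-one direction built from the leading singular pair $(\hat u,\hat v)$ of $D$, arriving at
\[
\lambda_{\min}\big(\nabla^2 g(W)\big)\ \le\ \tfrac12 M\,\rho(W)^2-\big(\|D\|_{\mathrm{op}}-\lambda\big),
\]
and then need $\|D\|_{\mathrm{op}}-\lambda\gtrsim m\min\{\tfrac12\rho(W)^2,\rho(X^\star)\}$. The sketch you give --- writing $D=P_\perp\big(\nabla f(X)-\nabla f(X^\star)\big)Q_\perp-\lambda P_\perp Z^\star Q_\perp$ with $P_\perp=\eye-\bar U\bar U^T$, $Q_\perp=\eye-\bar V\bar V^T$ --- only produces
\[
\|D\|_{\mathrm{op}}-\lambda\ \gtrsim\ \big\|P_\perp X^\star Q_\perp\big\|_{\mathrm{op}}\ -\ \tfrac{M-m}{M+m}\|X-X^\star\|_F,
\]
and $\|P_\perp X^\star Q_\perp\|_{\mathrm{op}}$ can be zero even when $X\ne X^\star$ (the column and row spaces of $X^\star$ may sit inside those of $X$). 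Your contradiction argument rules out \emph{exact} alignment of all four subspaces, but does not give a quantitative lower bound on the KKT gap, let alone one scaling like $\rho(W)^2$ or $\rho(X^\star)$. In the full-column-rank regime $r=r^\star$, the leakage $\tfrac12 M\rho(W)^2$ is genuinely positive and there is no evident mechanism by which $\|D\|_{\mathrm{op}}-\lambda$ absorbs it with the $M/m\le 1.5$ budget; the paper's bound in that regime is $-0.099m\rho(X^\star)$, which does not reference $\rho(W)$ at all.

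The paper avoids the KKT-gap route entirely. It tests the Hessian along $D=W-W^\star R$ with $R=\argmin_{\tilde R\in\O_r}\|W-W^\star\tilde R\|_F$. Using $\Xi(X)W=\zero$ (criticality) and $\Xi(X^\star)\succeq 0$, $\Xi(X^\star)W^\star=\zero$ (convex optimality), the ``first-order'' part of the Hessian collapses directly to $-2m\|X-X^\star\|_F^2$ after one mean-value step. The residual ``second-order'' term is $M\|D_UV^T+UD_V^T\|_F^2=M\|DW^T\|_F^2$ (this equality uses the balance property), and the heart of the proof is a pair of lemmas that bound $\|DW^T\|_F^2$ back by $\|WW^T-W^\star W^{\star T}\|_F^2$: first split $\|DW^T\|_F^2\le\tfrac18\|WW^T-W^\star W^{\star T}\|_F^2+C\|(WW^T-W^\star W^{\star T})QQ^T\|_F^2$ where $QQ^T$ projects onto $\range(W)$, then feed the \emph{critical point equation itself} back through restricted well-conditionedness to get $\|(WW^T-W^\star W^{\star T})QQ^T\|_F\le 2\tfrac{M-m}{M+m}\|X-X^\star\|_F$. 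Combined with $\|X-X^\star\|_F^2\le\tfrac12\|WW^T-W^\star W^{\star T}\|_F^2$ (again balance), this closes the loop and yields $[\nabla^2 g(W)](D,D)\le -0.06m\|WW^T-W^\star W^{\star T}\|_F^2$. The three cases in the statement then follow from standard distance inequalities relating $\|WW^T-W^\star W^{\star T}\|_F$ to $\dist(W,W^\star)=\|D\|_F$.

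In short: your rank-one escape direction is natural and works cleanly when $W$ is column-rank deficient, but it shifts the difficulty to a spectral lower bound on the KKT violation that is neither proved nor, as far as I can see, easily provable with the constants you need. The paper's choice $D=W-W^\star R$ bypasses that obstacle by converting everything into $\|WW^T-W^\star W^{\star T}\|_F^2$ and re-using the critical point equation to close the estimate.
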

\begin{remark}
In addition to the strict saddle property, Theorem \ref{thm:main} also shows that there is no spurious local minimum. This allows  a number of iterative optimization methods \cite{ge2015escaping,sun2016geometric,lee2016gradient} to find $X^\star$ with random initialization.
\end{remark}
\begin{remark}
Theorem \ref{thm:main} establishes the strict saddle property for both over-parameterization ($r > r^\star$) and exact parameterization ($r = r^\star$). Thus,
as long as we know an upper bound on $r^\star$, many simple iterative algorithms can help to find the global optimizer $X^\star$.
\end{remark}

\begin{remark}
The main result only requires $f(X)$ to be restricted well-conditioned. Hence, in addition to those with quadratic objective functions \cite{bhojanapalli2015dropping,li2016,bhojanapalli2016lowrankrecoveryl},  a range of other low-rank matrix recovery problems are covered by our main theorem, including  $1$-bit matrix completion \cite{davenport20141}, robust principal component analysis (PCA) \cite{li2016robust},  Poisson PCA \cite{salmon2014poisson}, and other more general low-rank matrix problems \cite{udell2016generalized}.
\end{remark}

\subsection{Related Work}

This research is inspired by several previous works where nonconvex reformulations of various convex optimizations are proposed and analyzed \cite{bhojanapalli2016lowrankrecoveryl, bhojanapalli2015dropping, sun2016geometric,sun2015complete,li2016}. Some of the proposed algorithms require initializing the first iterate into the attraction basin of the global optima \cite{candes2015Wirtinger, tu2015low, bhojanapalli2015dropping}, while others have guaranteed convergence with random initializations \cite{ge2015escaping,sun2016geometric, sun2015complete}.  The latter is achieved by studying the (nonconvex) landscape of the optimizations' objective function. Our work falls into the second category.

The most related work is non-square matrix sensing from linear observations, which minimizes the factored quadratic objective function \cite{park2016non}.
The ambiguity in the factored parameterization
$$\phi(U,V)=\phi(UR,VR^{-1}) \text{ for all nonsingular }R$$
tends to make the factored quadratic objective function badly-conditioned, especially when the matrix $R$ or its inverse is close to being singular \cite{park2016non,li2016symmetry}. To overcome this problem, the regularizer
\begin{align}\label{eqn:equal_energy}
\Theta_E(U,V)=\|U^TU-V^TV\|_F^2
\end{align}
is proposed to ensure that $U$ and $V$ have almost equal energy \cite{tu2015low, park2016non,li2016symmetry}.
%Although this strategy  resolves the ambiguity issue, the cost is a higher requirement on the conditioned number. For the strict saddle property to hold, it requires  the RIP constant $\delta$ to be no larger than 0.0363 in  \cite[page 4]{park2016non}, or equivalently, $M/m={(1+\delta)}/{(1-\delta)}\leq 1.076$ \cite[page 31]{bhojanapalli2015dropping}.
Our result shows that it is not necessary to introduce the extra regularization \eqref{eqn:equal_energy}. Indeed, the representation \eqref{eqn:nuclear:nvx} of the nuclear norm implicitly requires $U$ and $V$ to have equal energy. As a reformulation of the convex program \eqref{eqn:origin}, the nonconvex optimization \eqref{eqn:factored} inherits all its statistical performance.
Furthermore, by relating the first order optimality condition of the factored problem with the global optimality of the original nuclear-norm regularized convex program, our work provides a more transparent theoretical analysis that shows how the convex geometry is transformed into a nonconvex one.

In \cite{cabral2013unifying}, Cabral et al. worked on a similar problem and showed all global optima of \eqref{eqn:factored} corresponds to the solution of the convex program \eqref{eqn:origin}. The work \cite{haeffele2015global} applied the factorization approach to a more broad class of problems. When specialized to matrix inverse problems, their results show that any local minimizer $U$ and $V$ with zero columns is a global minimum for over-parameterization case, {i.e.}, $r>\rank(X^\star)$. However, there are no results discussing the existence of spurious local minima or the degenerate saddles in these previous work.  We extend their work and further prove that as long as the loss function $f(X)$ is restricted well-conditioned, all local minima are global minima and there are no degenerate saddles with no requirement on the size of the variables.

\subsection{Notations}
In this section, we collect notations used throughout the paper.
Denote $[n]=\{1,2,\ldots,n-1,n\}$. We reserve the symbols $\eye$ and  $\zero$  for the identity matrix and zero matrix/vector, respectively.
%A subscript is used to indicate its size when this is not clear from context.
%We call a matrix PSD, denoted by $X\succeq0$, if all its eigenvalues are nonnegative. The notation $X\succeq Y$ means $X-Y\succeq 0$, {i.e.}, $X-Y$ is PSD.
$\O_r = \{R \in \R^{r\times r}: RR^T = \eye_r\}$ represents the set of $r\times r$ real matrices.
Matrix norms, such as the spectral, nuclear, and Frobenius norms, are denoted by $\|\cdot\|$, $\|\cdot\|_*$ and $\|\cdot\|_F$, respectively.
The smallest nonzero singular value for any $X$ is denoted by
$\rho(X)$.

For any row-block matrix $W=[U^T~V^T]^T$, we denote $\wh{W}=[U^T~-V^T]^T$ by changing the sign of the second block of $W$.
%If a function $h(U,V)$ has two arguments, we occasionally use the notation $h(W)$ by stacking the two arguments $W=[U^T~V^T]^T$.
The gradient of a scalar function $f(Z):\R^{m\times n}\to \R$  is an $m\times n$ matrix, whose $(i,j)$th element is $[\nabla f(Z)]_{i,j}= \frac{\partial f(Z)}{\partial Z_{ij}}$ for $i\in [m]$, $j\in[n]$. Meanwhile,  the gradient can be also viewed as a linear form $[\nabla f(Z)](G) = \lg \nabla f(Z), G\rg = \sum_{i,j}\frac{\partial f(Z)}{\partial Z_{ij}} G_{ij}$ for any $G \in \R^{m\times n}$. We can view the Hessian of $f(Z)$ as a $4$th order tensor of size $m\times n\times m\times n$, whose $(i,j,k,l)$th entry is $[\nabla^2 f(Z)]_{i,j,k,l}$ $=\frac{\partial^2 f(Z)}{\partial Z_{ij}\partial Z_{k,l} }$ for $i, k \in [m]$, $j,l\in[n]$. Similarly, we can also view the Hessian as a bilinear form defined via $[\nabla^2 f(Z)](G,H)=\sum_{i,j,k,l}\frac{\partial^2 f(Z)}{\partial Z_{ij}\partial Z_{kl} } G_{ij}H_{kl}$ for any $G,H\in\R^{m\times n}$. Yet another way to represent the Hessian is as an $mn\times mn$ matrix $[\nabla^2 f(Z)]_{i,j}=\frac{\partial^2 f(Z)}{\partial x_i\partial x_j}$ for $i,j\in[mn]$, where $x_i$ is the $i$th element of the vectorization of $Z$. We will use these representations interchangeably whenever the specific form can be inferred from context.

\section{Problem Formulation}
%To overcome the issue of low computational efficiency, we first follow the idea of Burer-Monteiro style reparameterization \cite{burer2003nonlinear} to parameterize the low-rank variable $X=\phi(U,V)$ and get a factored problem.
In this work, we consider the nuclear norm regularization \eqref{eqn:origin}:
\[
\minimize_{X\in\R^{p\times q}} f(X)+\lambda\|X\|_*,
\]
which is equivalent to \cite[page 8]{recht2010guaranteed}:
\begin{equation}\label{eqn:origin:2}
\begin{aligned}
&\minimize_{X\in\R^{p\times q}} f(X)+\frac{\lambda}{2}(\tr(\Phi)+\tr(\Psi))\\
&\st \begin{bmatrix} \Phi&X\\X^T&\Psi \end{bmatrix} \succeq 0
\end{aligned}
\end{equation}
We can enforce the PSD constraint implicitly using the fact that any PSD variable $Q$ can be reparameterized as
$Q=WW^T$. More precisely, let
\begin{align}\label{eqn:replace:Q}
\begin{bmatrix} \Phi&X\\X^T&\Psi \end{bmatrix}=\begin{bmatrix} U\\V\end{bmatrix}\begin{bmatrix} U^T &V^T\end{bmatrix},
\end{align}
implying
\begin{equation}
\begin{aligned}\label{eqn:observation1}
X=UV^T,\ \
\Phi=UU^T,\ \
\Psi=VV^T.
\end{aligned}
\end{equation}
Plugging \eqref{eqn:observation1} into \eqref{eqn:origin:2} gives the Burer-Monteiro factored reformulation \eqref{eqn:factored}:
\begin{align*}
\minimize_{U\in\R^{p\times r},V\in\R^{q\times r}} f(UV^T)+  \frac{\lambda}{2}(\|U\|_F^2+\|V\|_F^2),
\end{align*}
where the PSD constraint is dropped by construction.
As discussed in Section \ref{sec:introduction}, this new factored formulation \eqref{eqn:factored} can potentially increase computational efficiency
in two ways: {\em (i)} avoiding expensive SVDs because of replacing the nuclear norm $\|X\|_*$ with the squared term $(\|U\|_F^2+\|V\|_F^2)/2$; {\em (ii)}  a substantial reduction in the number of the optimization variables from $pq$ to $(p+q)r$.

\subsection{The Necessity of Restricted Well-Conditionedness }
The factored parameterization
\[\phi(U,V)=UV^T\]
transforms the original convex optimization into a nonconvex one and introduces additional critical points ({i.e.}, those $(U,V)$ with $\nabla g(U,V) = \zero$ that are not global optima of the factored optimization \eqref{eqn:factored}). This causes the convex program \eqref{eqn:origin} and its low-rank reformulation \eqref{eqn:factored} not equivalent.
In particular, when the loss function $f(X)$ is not well-conditioned, spurious local minima might emerge from these introduced critical points. For example, Srebro et al. \cite{srebro2003weighted} showed for weighted low-rank approximation, if the objective function is not well-conditioned ({e.g.}, the weight matrix has a few dominant entries), a non-global local minimum emerges for the factored problem. Similarly, as discussed in \cite[Examples 1,2]{bhojanapalli2016lowrankrecoveryl}, when the objective function of a matrix sensing problem does not satisfy the Restricted Isometry Property (RIP), there would be spurious local minima in the factored problem.
Therefore, to enable the two problems \eqref{eqn:origin} and \eqref{eqn:factored} equivalent, it is reasonable to introduce the restricted well-conditionedness assumption \eqref{eqn:assumption} for the general loss function $f(X)$ in \eqref{eqn:origin}.

\subsection{Consequences of  Restricted Well-Conditionedness }
We observe that the $(2r,4r)$-restricted well-conditionedness assumption \eqref{eqn:assumption}  reduces to the RIP when the objective function is quadratic \cite{bhojanapalli2016lowrankrecoveryl}. To see this,
%note the following equivalent form of the  assumption:
%\begin{align}\label{eqn:RIP:1}
%\left|\frac{2}{M+m}[\nabla^2f(X)-\eye](G,G)\right| \leq \delta\|G\|_F^2
%\end{align}
%with $\delta:=({M-m})/({M+m})$,
we note the $(2r,4r)$-restricted well-conditionedness assumption \eqref{eqn:assumption}
indicates a restricted orthogonality property:
\begin{proposition}\label{pro:RIP}
Let $f(X)$ satisfies the $(2r,4r)$-restricted well-conditionedness assumption \eqref{eqn:assumption} with positive $m$ and $M$. Then
%for any $n\times m$ matrices $\mZ,\mG,\mH$ of rank at most $2r$, we have
\begin{align}
&\left|\frac{2}{M+m}[\nabla^2f(X)](G,H) - \langle G,H \rangle\right| \leq \frac{M-m}{M+m}\|G\|_F \|H\|_F\nonumber
\end{align}
for any $p\times q$ matrices $X,G,H$ of rank at most $2r$.
\end{proposition}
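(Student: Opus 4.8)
The plan is to reduce Proposition \ref{pro:RIP} to the polarization identity for the bilinear form $[\nabla^2 f(X)](\cdot,\cdot)$ together with a homogeneity argument. Fix $X$ of rank at most $2r$; then \eqref{eqn:assumption} says precisely that $D\mapsto [\nabla^2 f(X)](D,D)$ is sandwiched between $m\|D\|_F^2$ and $M\|D\|_F^2$ for every $D$ of rank at most $4r$. The key observation is that if $G$ and $H$ each have rank at most $2r$, then both $G+H$ and $G-H$ have rank at most $4r$, so the quadratic bound is applicable with $D=G+H$ and with $D=G-H$.

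First I would apply the upper bound to $D=G+H$ and the lower bound to $D=G-H$, and use bilinearity and symmetry of the Hessian to write $[\nabla^2 f(X)](G+H,G+H)-[\nabla^2 f(X)](G-H,G-H)=4[\nabla^2 f(X)](G,H)$. Expanding $\|G\pm H\|_F^2=\|G\|_F^2\pm 2\langle G,H\rangle+\|H\|_F^2$ and rearranging yields
\[
\frac{2}{M+m}[\nabla^2 f(X)](G,H)-\langle G,H\rangle\;\le\;\frac{M-m}{2(M+m)}\left(\|G\|_F^2+\|H\|_F^2\right).
\]
Interchanging which bound is used on which direction (lower bound on $D=G+H$, upper bound on $D=G-H$) gives the matching reverse inequality with the right-hand side negated, so
\[
\left|\frac{2}{M+m}[\nabla^2 f(X)](G,H)-\langle G,H\rangle\right|\;\le\;\frac{M-m}{2(M+m)}\left(\|G\|_F^2+\|H\|_F^2\right).
\]

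The final step is to sharpen the right-hand side from a sum of squares to a product of norms. Both $\frac{2}{M+m}[\nabla^2 f(X)](G,H)$ and $\langle G,H\rangle$ are invariant under the rescaling $G\mapsto tG$, $H\mapsto t^{-1}H$ for any $t>0$, and this rescaling changes neither the rank of $G$ nor that of $H$. Applying the inequality just derived to $tG$ and $t^{-1}H$ and then choosing $t^2=\|H\|_F/\|G\|_F$ (the degenerate case $G=0$ or $H=0$ is trivial, since then both sides vanish) replaces $\|G\|_F^2+\|H\|_F^2$ by $2\|G\|_F\|H\|_F$, which is exactly the claimed bound $\tfrac{M-m}{M+m}\|G\|_F\|H\|_F$.

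I do not anticipate a genuine obstacle here; the computation is essentially polarization plus an optimization over a scalar. The one point that requires care is the rank bookkeeping: it is essential that the hypothesis \eqref{eqn:assumption} controls the quadratic form along directions of rank up to $4r$ rather than merely $2r$, since this is precisely what makes $G\pm H$ admissible test directions when $G$ and $H$ have rank at most $2r$.
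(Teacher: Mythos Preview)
Your proposal is correct and follows essentially the same route as the paper: both proofs use polarization via $G\pm H$ (exploiting that $\rank(G\pm H)\le 4r$) to pass from the two-sided quadratic bound to a bound on the bilinear form. The only cosmetic difference is that the paper normalizes to $\|G\|_F=\|H\|_F=1$ at the outset, whereas you first derive the bound with $\tfrac12(\|G\|_F^2+\|H\|_F^2)$ on the right and then optimize over the rescaling $G\mapsto tG,\ H\mapsto t^{-1}H$; these are equivalent maneuvers.
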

\begin{proof}[Proof of Proposition~\ref{pro:RIP}]
The proof follows similar to~\cite{candes2008restricted}.
If either $G$ or $H$ is zero, it holds since both sides are $0$.
For nonzero $G$ and $H$, we can assume $\|G\|_F = \|H\|_F = 1$ without loss of generality. Then the assumption~\eqref{eqn:assumption} implies
\begin{align*}
&m \left\|G-H\right\|_F^2 \leq [\nabla^2 f(X)](G-H,G-H) \leq M \left\|G-H\right\|_F^2, \\
&m \left\|G+H\right\|_F^2 \leq [\nabla^2 f(X)](G+H,G+H) \leq M \left\|G+H\right\|_F^2.
\end{align*}
Thus we have
\begin{align*}
\left|2\left[\nabla^2f(X)\right](G,H) - (M+m)\left\langle G,H \right\rangle
\right|
&\leq \frac{M-m}{2} \underbrace{\left(\left\|G\right\|_F^2 +\left\|H\right\|_F^2\right)}_{=2}
= M-m=(M-m)\underbrace{\|G\|_F\|H\|_F}_{=1}.
\end{align*}
We complete the proof by dividing both sides by $M+m$:
\[\left|\frac{2}{M+m}[\nabla^2f(X)](G,H) - \langle G,H \rangle\right| \leq \frac{M-m}{M+m}\|G\|_F \|H\|_F.\]
\end{proof}

Sharing a similar spirit with the standard RIP, Proposition \ref{pro:RIP} also implies that the operator $\frac{2}{M+m}\nabla^2 f(X)$ preserves geometric structures, for low-rank matrices.
We intend to show that, when the function $f(X)$ in \eqref{eqn:origin} satisfy the restricted well-conditioned assumption \eqref{eqn:assumption}, the two programs \eqref{eqn:origin} and \eqref{eqn:factored} are equivalent: we can always find the global optimizer $X^\star$ by applying the simple iterative optimization methods to the factored problem.

%\begin{lem}\cite{higham1995matrix}\label{lem:procrust}
%An optimal solution for the orthogonal Procrustes problem:
%\[R=\argmin_{\tilde{R}\in\O_r}\|W_1-W_2 \tilde{R}\|_F^2 = \argmax_{\tilde{R}\in\O_r} \lg W_1, W_2 \tilde{R}\rg \]
%is given by $R=LP^T$, where the orthogonal matrices $L, P \in \R^{r\times r}$ are defined via the singular value decomposition of $W_2^TW_1=L\Sigma P^T$.
%Moreover, we have $W_1^T W_2 R= (W_2 R)^T W_1\succeq 0$.
%%and
%%$\lg W_1, W_2 R\rg= \|W_1^T W_2\|_*$.
%\end{lem}
\section{Understand the Transformed Landscape}
It is interesting to understand how the parameteriztion $\phi(U,V)$ transforms the geometric structures of the convex objective function $f(X)$  by categorizing  the critical points of the nonconvex factored function $g(U,V)$. In particular, we will illustrate how the globally optimal solution of the convex program is transformed in the domain of $g(U,V)$.
Furthermore, we will explore the properties of the additional critical points introduced by the parameterization and find a way of utilizing these properties to prove the strict saddle property.  For those purposes, the optimality conditions for the two programs \eqref{eqn:origin} and \eqref{eqn:factored} will be compared.

Before continuing this geometry-based argument, it is important to have a good understanding of the domain of the factored problem and set up a metric for this domain.
\subsection{Metric in the Domain of the Factored Problem}
Since the parameterization $\phi(U,V)$ and the factored regularization $\Theta(U,V)$ in the factored objective function $g(U,V)$ are both rotational invariant:
\begin{align*}
\phi(U,V)&=\phi(UR,VR) \text{ for }R\in\O_r \ \text{ and }\
\Theta(U,V)=\Theta(UR,VR) \text{ for }R\in\O_r,
\end{align*}
we obtain that $g(U,V)$ is a rotational-invariant function and the domain of  $g(U,V)$ is stratified into equivalent classes and can be treated as a quotient manifold \cite{absil2009optimization}.

For matrices lying in the same equivalent classes, they differ each other by only an orthogonal rotation.
Hence, to measure the distance of $W_1$ and $W_2$ lying in the quotient manifold, we can define the distance on their corresponding equivalent classes:
\begin{align*}
\dist(W_1,W_2)
=&\min_{R_1\in\O_r,R_2\in\O_r}\|W_1R_1-W_2 R_2\|_F
=\min_{R_1\in\O_r,R_2\in\O_r}\|W_1-W_2 R_2R_1^T\|_F
=\min_{R\in\O_r}\|W_1-W_2 R\|_F,
\end{align*}
where the second line follows from the rotation invariance of $\|\cdot\|_F$ and the third line follows from the property of the closed multiplicative operation for the orthogonal group $\mathcal{G}=\O_r$  \cite[Definition 7.2]{chirikjian2016harmonic}.

\subsection{Optimality Condition for the Convex Program}
%The objective function of the original convex program \eqref{eqn:origin} is strictly convex  since it is a sum of the nuclear norm $\|X\|_*$ which is strictly convex and a general convex function $f(X)$. This implies the convex program \eqref{eqn:origin} has a unique global optimizer:
%\begin{align}\label{eqn:origin:X}
%X^\star=\argmin_{X\in\R^{p\times q}} f(X)+\lambda\|X\|_*.
%\end{align}
% I don't think $\|X\|_*$ is strictly convex.

As an unconstrained convex optimization, all critical points of \eqref{eqn:origin} are global optima and are characterized by the necessary and sufficient KKT condition \cite{boyd2004convex}:
\begin{align}\label{eqn:optimality:origin}
\nabla f(X^\star)\in-\lambda\partial\|X^\star\|_*,
\end{align}
where $\partial\|X^\star\|_*$ denotes the subdifferential (the set of subgradient) of the nuclear norm $\|X\|_*$ evaluated at $X^\star$.
The subdifferential  of the matrix nuclear norm is defined by
\begin{align*}
\partial \|X\|_* =\{&D \in \R^{p\times q}:\|R\|_* \geq \|X\|_*+\langle R - X, D\rangle,
 \text{all}\ R \in \R^{p\times q}\}.
\end{align*}
We have a more explicit characterization of the subdifferential of the nuclear norm using the singular value decomposition. More specifically, suppose $X = P\Sigma Q^T$ is the (compact) singular value decomposition of $X\in \R^{p \times q}$ with $P \in \R^{q\times r}, Q \in \R^{q\times r}$ and $\Sigma$ being an $r\times r$ diagonal matrix. Then the subdifferential of the matrix nuclear norm at $X$ is given by \cite[Equation (2.9)]{recht2010guaranteed}
\begin{align*}
\partial \|X\|_* = \{& PQ^T + W: P^T W=0, WQ=0, \|W\| \leq 1\}.
\end{align*}
By combining this representation of the subdifferential and the KKT condition \eqref{eqn:optimality:origin}, we present an equivalent expression for the optimality condition:
\begin{equation}\label{eqn:optimality:origin:1}
\begin{aligned}
\begin{cases}
\nabla f(X^\star) Q^\star =-\lambda P^\star\\
\nabla f(X^\star)^T P^\star =-\lambda Q^\star
\end{cases}
\text{and }\ \ \
\|\nabla f(X^\star)\| &\leq \lambda,
\end{aligned}
\end{equation}
where $Q^\star, P^\star$  denote respectively the right- and left- singular matrices in the compact SVD of $X^\star=P^\star\Sigma^\star Q^{\star T}$ with $P^\star \in \R^{q\times r^\star}, Q^\star \in \R^{q\times r^\star}$ and $\Sigma^\star$ being an $r^\star\times r^\star$ diagonal matrix where we assume $\rank(X^\star)=r^\star.$
Since we set $r\geq r^\star$ in the factored problem \eqref{eqn:factored},   in order to agree with the dimensions  in \eqref{eqn:factored}, we define the optimal factors $U^\star\in\R^{p\times r}$, $V^\star\in\R^{q\times r}$ as
\begin{equation}
\begin{aligned}\label{eqn:global:factor}
U^\star&=P^\star[\sqrt{\Sigma^\star}~\zero_{r^\star\times(r-r^\star)}] R;\\
V^\star&=Q^\star[\sqrt{\Sigma^\star}~\zero_{r^\star\times(r-r^\star)}] R;
\end{aligned}
\end{equation}
where $R\in\O_r$. Consequently, with the optimal factors $U^\star,V^\star$ defined in \eqref{eqn:global:factor}, we can rewrite the optimal condition \eqref{eqn:optimality:origin:1} as
\begin{equation}\label{eqn:optimality:origin:2}
\begin{aligned}
\begin{cases}
\nabla f(X^\star) V^\star=-\lambda U^\star\\
\nabla f(X^\star)^T U^\star=-\lambda V^\star\\
\end{cases}
\text{and }\ \ \
\|\nabla f(X^\star)\| &\leq \lambda.
\end{aligned}
\end{equation}
Stacking the two  variables $U^\star,V^\star$ into $W^\star=[U^{\star T}~V^{\star T}]^T$, we obtain a more concise and equivalent form of \eqref{eqn:optimality:origin:2}:
\begin{equation}\label{eqn:optimality:origin:3}
\begin{aligned}
\Xi(X^\star)W^\star&=\zero\ \ \ \text{ and }\ \ \
\|\nabla f(X^\star)\| \leq \lambda
\end{aligned}
\end{equation}
with
\begin{align}\label{eqn:Xi}
\Xi(X):=\begin{bmatrix}
\lambda\eye&\nabla f(X)\\
\nabla f(X)^T&\lambda\eye
\end{bmatrix}
\text{ for all }X.
\end{align}
An immediate result of \eqref{eqn:optimality:origin:3} is
\begin{align}\label{eqn:PSD}
\Xi(X^\star)
=\begin{bmatrix}
\lambda\eye&\nabla f(X^\star)\\
\nabla f(X^\star)^T&\lambda\eye
\end{bmatrix}
\succeq 0
\end{align}
by Schur complement theorem \cite[A.5.5]{boyd2004convex} in view of $\|\nabla f(X^\star)\|\leq\lambda$ in \eqref{eqn:optimality:origin:3}.

\subsection{Properties of the Critical Points of the Factored Program}
First of all, the gradient of $g(U,V)$ is given by
\begin{equation*}
\begin{aligned}
\nabla g(U,V)
=
\begin{bmatrix}
\nabla_U g(U,V)\\
\nabla_V g(U,V)
\end{bmatrix}
=
\begin{bmatrix}
\nabla f(X)V+\lambda U\\
\nabla f(X)^T U+\lambda V
\end{bmatrix}.
\end{aligned}
\end{equation*}
By invoking the notation $\Xi(X)$ in \eqref{eqn:Xi}, we obtain a more concise expression
\begin{equation}\label{eqn:grad:1}
\begin{aligned}
\nabla g(W)&=\Xi(X)W.
\end{aligned}
\end{equation}
Let the set of the critical points of $g(W)$ be denoted as
\begin{align}\label{eqn:critical}
\X:=\{W\in\R^{(p+q)\times r}: \nabla g(W)=\zero\}.
\end{align}
It is easy to see that any critical point $W$ of the factored problem \eqref{eqn:factored}, {i.e.}, $W\in\X$, also satisfies the left part of the optimality condition \eqref{eqn:optimality:origin:3} of the convex program. If the critical point $W=(U,V)$ additionally satisfies $\|\nabla f(UV^T)\|\leq\lambda$,  then the pair $(U,V)$ corresponds to the global optimizer $X^\star$. It remains to study the additional critical points that violates $\|\nabla f(UV^T)\|\leq\lambda$, which are introduced by the parameterization $\phi(U, V)$.

To show that the nuclear norm reformulation $(\|U\|_F^2 + \|V\|_F^2)/2$ guarantees $U$ and $V$ have equal energy at every critical point, we define the notation of balanced pairs:
\begin{definition}[Balanced pairs] \label{def:balanced:pair}
Let $U\in\R^{p\times r}$ and $V\in\R^{q\times r}$.
Then $(U,V)$ is a balanced pair if the Gram matrices of $U$ and $V$ are the same:
\[U^TU-V^TV=\zero.\]
All the balanced pairs form the balanced set, denoted by $\E(p,q,r)$ where $(p,q,r)$ indicates the dimensions of $(U,V)$.
\end{definition}
By stacking the variables $U,V$ into $W=[U^T~V^T]$ and invoking $\wh{W}=[U^T~-V^T]^T$, we get
\begin{align}\label{eqn:balanced:equation}
\wh{W}^T{W}=W^T\wh{W}=U^TU-V^TV.
\end{align}
Then the definition of the set $\E(p,q,r)$ simplifies as
\begin{align}\label{eqn:balanced:set}
\E(p,q,r)=\left\{W\in\R^{(p+q)\times r}: W^T\wh{W}=\zero\right\}.
\end{align}
Proposition \ref{pro:ef} claims that the critical points of $g(U,V)$ are balanced pairs,  whose proof is given in Appendix \ref{sec:proof:pro:ef}.
\begin{proposition}\label{pro:ef}
Let $\X$ be the set of critical points of $g(U,V)$ in \eqref{eqn:factored} and $\E$ be the balanced set \eqref{eqn:balanced:set}. Then we have
$\X\subset\E(p,q,r).$
\end{proposition}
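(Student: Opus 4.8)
The plan is to show that any critical point $W=[U^T~V^T]^T$ of $g$ satisfies $U^TU=V^TV$, exploiting the structure of the gradient $\nabla g(W)=\Xi(X)W$ derived in \eqref{eqn:grad:1}. First I would write out the two blocks of the critical point equation $\nabla g(U,V)=\zero$ explicitly, namely $\nabla f(X)V+\lambda U=\zero$ and $\nabla f(X)^TU+\lambda V=\zero$, where $X=UV^T$. The natural move is to left-multiply the first equation by $U^T$ and the second by $V^T$, obtaining $U^T\nabla f(X)V=-\lambda U^TU$ and $V^T\nabla f(X)^TU=-\lambda V^TV$. Since the left-hand sides are transposes of each other (as scalars in the $r\times r$ matrix sense, $V^T\nabla f(X)^TU=(U^T\nabla f(X)V)^T$), taking a transpose of one relation and comparing gives $\lambda U^TU=\lambda V^TV$, hence $U^TU-V^TV=\zero$ provided $\lambda\neq 0$, which means $W\in\E(p,q,r)$ by the characterization \eqref{eqn:balanced:set}.

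A cleaner packaging of the same computation, which I would actually present, is to use the concise form $\nabla g(W)=\Xi(X)W=\zero$ together with the $\wh{\cdot}$ notation. One computes $W^T\Xi(X)\wh{W}$ in two ways. On one hand, since $\Xi(X)W=\zero$ we would want to relate things to $\wh{W}$; the key algebraic identity is that $\Xi(X)$ has the block anti-structure so that $\Xi(X)\wh{W}$ equals a sign-flipped rearrangement. Concretely, $\Xi(X)\wh{W}=\begin{bmatrix}\lambda U-\nabla f(X)V\\ \lambda(-V)+\nabla f(X)^TU\end{bmatrix}$, and using the critical point equations $\nabla f(X)V=-\lambda U$, $\nabla f(X)^TU=-\lambda V$ this becomes $\begin{bmatrix}2\lambda U\\ -2\lambda V\end{bmatrix}=2\lambda\wh{W}$. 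Therefore $\wh{W}^T\Xi(X)\wh{W}=2\lambda\wh{W}^T\wh{W}$, while on the other hand $\wh{W}^T\Xi(X)W=2\lambda\,\wh{W}^TW$ does not immediately vanish — so instead I would take the symmetric route: from $\Xi(X)W=\zero$ we get $\wh{W}^T\Xi(X)W=\zero$, and a direct block expansion of $\wh{W}^T\Xi(X)W$ yields $\lambda(U^TU-V^TV)+\lambda(V^TV-U^TU)+\text{(cross terms that cancel)}$; I would verify these cross terms cancel because of the sign flip in $\wh{W}$. The bookkeeping here is the only slightly delicate part.

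Given that the cross-term cancellation needs care, the most robust presentation is simply the elementary one: multiply block 1 of $\nabla g=\zero$ on the left by $U^T$, multiply block 2 on the left by $V^T$, equate via transposition, and cancel $\lambda>0$ (note the well-conditionedness assumption \eqref{eqn:assumption} with $m>0$ forces $\lambda$ effectively nonzero in the regime of interest, and in any case $\lambda>0$ is the standing assumption on the regularization parameter). I expect the main obstacle — really the only obstacle — to be making sure the identification $V^T\nabla f(X)^TU=(U^T\nabla f(X)V)^T$ is stated correctly and that the conclusion $U^TU=V^TV$ is then immediate; there is no inequality, no appeal to restricted well-conditionedness, and no use of the structure of $f$ beyond differentiability, so this is a short algebraic lemma whose entire content is the symmetry of the bilinear pairing induced by $\nabla f(X)$. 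I would conclude by invoking \eqref{eqn:balanced:set} to restate $U^TU-V^TV=\zero$ as $W\in\E(p,q,r)$, proving $\X\subset\E(p,q,r)$.
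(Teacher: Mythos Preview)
Your proposal is correct and takes essentially the same approach as the paper: the paper symmetrizes the critical-point equation as $\wh{W}^T\nabla g(W)+\nabla g(W)^T\wh{W}=\zero$ and observes via \eqref{eqn:grad:1} that the cross terms $U^T\nabla f(X)V-V^T\nabla f(X)^TU$ cancel, leaving $2\lambda(U^TU-V^TV)=\zero$, which is exactly your block-by-block computation written in the $\wh{W}$ notation. Your elementary version (left-multiply the two blocks by $U^T$ and $V^T$, transpose, cancel $\lambda$) is the unpacked form of the same identity and is arguably clearer; the meandering middle paragraph about $\Xi(X)\wh{W}$ can be dropped.
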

%\begin{proof}
%See Appendix \ref{sec:proof:pro:ef}.
%\end{proof}
Next, we derive two properties of those points lying in $\E(p,q,r)$ that involve the relationship of the energy of the on-diagonal blocks and the off-diagonal blocks of certain block matrix, say $Q=\begin{bmatrix} A &B\\C&D  \end{bmatrix}$ with
$A\in\R^{p\times p}, B\in\R^{p\times q}, C\in\R^{q\times p}, D\in\R^{q\times q}$. More precisely,  we  define
\begin{equation}
\begin{aligned}
\Pon(Q)=\begin{bmatrix} A&\zero  \\ \zero&D  \end{bmatrix} \text{ and }
\Poff(Q)=\begin{bmatrix} \zero&B \\ C&\zero  \end{bmatrix}.
\end{aligned}
\end{equation}
When $\Pon(\cdot)$ and $\Poff(\cdot)$ are acting on the product of two block matrices $W_1W_2^T$ for
$W_1=[U_1^T~V_1^T]^T$ and  $W_2=[U_2^T~V_2^T]^T$ with $U_1,U_2\in\R^{p\times r}$ and $V_1,V_2\in\R^{q\times r}$, we observe that
\begin{equation}\label{eqn:Pon:Poff}
\begin{aligned}
\Pon(W_1W_2^T)&=\begin{bmatrix} U_1U_2^T&\zero  \\ \zero& V_1V_2^T  \end{bmatrix}= \frac{W_1W_2^T+\wh{W}_1\wh{W}_2^T}{2};\\
\Poff(W_1W_2^T)&=\begin{bmatrix} \zero&U_1V_2^T  \\ V_1U_2^T& \zero \end{bmatrix}= \frac{W_1W_2^T-\wh{W}_1\wh{W}_2^T}{2}.
\end{aligned}
\end{equation}
Now we are ready to provide these two properties,  summarized in Lemma \ref{lem:ef:1}, \ref{lem:ef:2}, whose proofs are in Appendix \ref{sec:proof:lem:ef:1}, Appendix \ref{sec:proof:lem:ef:2}, respectively.
\begin{lem}\label{lem:ef:1} Let $W\in\E(p,q,r)$. Then for every $D=[ D_U^T~D_V^T]^T$ with consistent sizes,
we have
\[\|\Pon(DW^T)\|_F^2=\|\Poff(DW^T)\|_F^2.\]
\end{lem}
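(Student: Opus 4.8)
The plan is to avoid any coordinatewise expansion of $DW^T$ and instead read the claim directly off the block identities \eqref{eqn:Pon:Poff} together with the characterization \eqref{eqn:balanced:set} of the balanced set. The point is that the quantity $\|\Pon(DW^T)\|_F^2-\|\Poff(DW^T)\|_F^2$ collapses, after a trace manipulation, to something proportional to $W^T\wh W$, which vanishes precisely because $W\in\E(p,q,r)$.

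Concretely, write $D=[D_U^T~D_V^T]^T$ with $D_U\in\R^{p\times r}$, $D_V\in\R^{q\times r}$, so that $\wh D=[D_U^T~-D_V^T]^T$ is well defined. First I would apply \eqref{eqn:Pon:Poff} with $W_1=D$ and $W_2=W$, which gives
\[
\Pon(DW^T)=\tfrac12\big(DW^T+\wh D\wh W^T\big),\qquad
\Poff(DW^T)=\tfrac12\big(DW^T-\wh D\wh W^T\big),
\]
hence $\Pon(DW^T)+\Poff(DW^T)=DW^T$ and $\Pon(DW^T)-\Poff(DW^T)=\wh D\wh W^T$. Applying the polarization identity $\|A\|_F^2-\|B\|_F^2=\langle A-B,\,A+B\rangle$ with $A=\Pon(DW^T)$ and $B=\Poff(DW^T)$ then yields
\[
\|\Pon(DW^T)\|_F^2-\|\Poff(DW^T)\|_F^2
=\big\langle \wh D\wh W^T,\,DW^T\big\rangle
=\tr\!\big(\wh W\,\wh D^T D\, W^T\big)
=\tr\!\big((W^T\wh W)\,\wh D^T D\big),
\]
where the last step is just cyclic invariance of the trace. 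Since $W\in\E(p,q,r)$ means exactly $W^T\wh W=\zero$ by \eqref{eqn:balanced:set} (equivalently $U^TU-V^TV=\zero$ via \eqref{eqn:balanced:equation}), the right-hand side is zero, which is the claimed equality.

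I expect there is no real obstacle here; the computation is short and the balanced hypothesis enters at exactly one place. For completeness one could also give the brute-force version: expanding $DW^T$ into its four blocks $D_UU^T,\ D_UV^T,\ D_VU^T,\ D_VV^T$, one gets $\|\Pon(DW^T)\|_F^2=\tr\big((U^TU)D_U^TD_U\big)+\tr\big((V^TV)D_V^TD_V\big)$ and $\|\Poff(DW^T)\|_F^2=\tr\big((V^TV)D_U^TD_U\big)+\tr\big((U^TU)D_V^TD_V\big)$, and substituting $U^TU=V^TV$ makes the two sums identical. The only thing to be careful about in either route is bookkeeping — tracking which sub-block lands in $\Pon$ versus $\Poff$ — and recognizing that the balancedness of $W$ is precisely what cancels the cross terms.
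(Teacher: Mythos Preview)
Your argument is correct and is essentially the same as the paper's: both use \eqref{eqn:Pon:Poff} to write $\|\Pon(DW^T)\|_F^2-\|\Poff(DW^T)\|_F^2=\langle DW^T,\wh D\wh W^T\rangle$ (you spell out the polarization step, the paper states it directly), then rewrite this as an inner product against $W^T\wh W$ and invoke $W\in\E(p,q,r)$ to conclude it vanishes.
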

%\begin{proof}
%See Appendix \ref{sec:proof:lem:ef:1}.
%\end{proof}

\begin{lem}\label{lem:ef:2}
Let $W_1,W_2\in\E(p,q,r)$ with $W_1=[U_1^T~V_1^T]^T$ and  $W_2=[U_2^T~V_2^T]^T$. Then
\[\|\Pon(W_1W_1^T-W_2 W_2^T)\|_F^2\leq\|\Poff(W_1W_1^T-W_2 W_2^T)\|_F^2. \]
\end{lem}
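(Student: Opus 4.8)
The plan is to reduce everything to the identity \eqref{eqn:Pon:Poff}. Write $W_1W_1^T = \Pon(W_1W_1^T) + \Poff(W_1W_1^T)$ and likewise for $W_2$, and note that these two pieces are \emph{orthogonal} in the Frobenius inner product (one is supported on the diagonal blocks, the other on the off-diagonal blocks). Hence for the difference $\Delta := W_1W_1^T - W_2W_2^T$ we have $\|\Delta\|_F^2 = \|\Pon(\Delta)\|_F^2 + \|\Poff(\Delta)\|_F^2$, and it suffices to show $\|\Pon(\Delta)\|_F^2 \le \tfrac12\|\Delta\|_F^2$, equivalently that $\Pon(\Delta)$ carries no more than half the total energy. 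I will instead work directly with the two formulas in \eqref{eqn:Pon:Poff} applied with $W_1=W_2$ in each slot. Using $\Pon(W_iW_i^T) = \tfrac12(W_iW_i^T + \wh W_i\wh W_i^T)$ and $\Poff(W_iW_i^T) = \tfrac12(W_iW_i^T - \wh W_i\wh W_i^T)$, we get
\begin{align*}
\Pon(\Delta) &= \tfrac12\bigl((W_1W_1^T - W_2W_2^T) + (\wh W_1\wh W_1^T - \wh W_2\wh W_2^T)\bigr),\\
\Poff(\Delta) &= \tfrac12\bigl((W_1W_1^T - W_2W_2^T) - (\wh W_1\wh W_1^T - \wh W_2\wh W_2^T)\bigr).
\end{align*}

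Now expand both squared Frobenius norms via the parallelogram-type identity: with $A := W_1W_1^T - W_2W_2^T$ and $B := \wh W_1\wh W_1^T - \wh W_2\wh W_2^T$,
\[
\|\Poff(\Delta)\|_F^2 - \|\Pon(\Delta)\|_F^2 = \tfrac14\bigl(\|A-B\|_F^2 - \|A+B\|_F^2\bigr) = -\lg A, B\rg.
\]
So the lemma is equivalent to the single inequality $\lg A, B\rg \le 0$, i.e.
\[
\lg W_1W_1^T - W_2W_2^T,\ \wh W_1\wh W_1^T - \wh W_2\wh W_2^T\rg \le 0.
\]
Expanding the inner product into four terms and using $\lg W_iW_i^T, \wh W_j\wh W_j^T\rg = \lg W_i^T\wh W_j,\ (W_i^T\wh W_j)^?\rg$ — more precisely $\lg W_iW_i^T,\wh W_j\wh W_j^T\rg = \|W_i^T\wh W_j\|_F^2$ by cyclic invariance of the trace — this becomes
\[
\|W_1^T\wh W_1\|_F^2 - \|W_1^T\wh W_2\|_F^2 - \|W_2^T\wh W_1\|_F^2 + \|W_2^T\wh W_2\|_F^2 \le 0.
\]
Here is where the balanced hypothesis enters: by \eqref{eqn:balanced:equation} and \eqref{eqn:balanced:set}, $W_1,W_2\in\E$ means $W_1^T\wh W_1 = \zero$ and $W_2^T\wh W_2 = \zero$, so the first and last terms vanish and the claimed inequality reduces to the trivially true
\[
-\|W_1^T\wh W_2\|_F^2 - \|W_2^T\wh W_1\|_F^2 \le 0.
\]

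The main obstacle is essentially bookkeeping: making sure the orthogonality $\lg \Pon(M),\Poff(N)\rg = 0$ is invoked correctly so that the "$\Poff$-minus-$\Pon$" difference collapses to the single cross term $-\lg A,B\rg$, and then correctly expanding $\lg W_iW_i^T,\wh W_j\wh W_j^T\rg$ using $\tr(W_iW_i^T\wh W_j\wh W_j^T) = \tr\bigl((W_i^T\wh W_j)(\wh W_j^TW_i)\bigr) = \|W_i^T\wh W_j\|_F^2$ (note $\wh W_j^T W_i = (W_i^T\wh W_j)^T$). Once those two identities are in place the balanced condition does all the remaining work, and in fact the proof shows the slightly stronger statement that the gap $\|\Poff(\Delta)\|_F^2 - \|\Pon(\Delta)\|_F^2$ equals exactly $\|W_1^T\wh W_2\|_F^2 + \|W_2^T\wh W_1\|_F^2 \ge 0$, with equality iff $W_1^T\wh W_2 = W_2^T\wh W_1 = \zero$.
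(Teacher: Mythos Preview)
Your proof is correct and follows essentially the same route as the paper: both reduce the claim to $\lg W_1W_1^T-W_2W_2^T,\ \wh W_1\wh W_1^T-\wh W_2\wh W_2^T\rg\le 0$ via the formulas \eqref{eqn:Pon:Poff}, expand into four terms, kill the diagonal ones with the balanced condition \eqref{eqn:balanced:set}, and observe the remaining two cross terms are nonpositive (the paper phrases this as an inner product of PSD matrices, you as $-\|W_1^T\wh W_2\|_F^2-\|W_2^T\wh W_1\|_F^2$, which is the same thing). Your additional remark that the gap equals exactly $\|W_1^T\wh W_2\|_F^2+\|W_2^T\wh W_1\|_F^2$ is a nice bonus not stated in the paper.
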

%\begin{proof}
%See Appendix \ref{sec:proof:lem:ef:2}.
%\end{proof}

\subsection{Characterizing the Critical Points by the Hessian}

First of all, we observe that $W^\star=[U^{\star T}~V^{\star T}]^T$ with $(U^\star,V^\star)$  given in \eqref{eqn:global:factor}, is also the global optimum of the factored program \eqref{eqn:factored} given by Proposition \ref{pro:global} with its proof listed in Appendix \ref{sec:proof:pro:global}:
\begin{proposition}\label{pro:global}
For any  $(U^\star,V^\star)$  given in \eqref{eqn:global:factor}, we have
\[g(U^\star,V^\star)\leq g(U,V)\text{ for all }U\in\R^{p\times r}, V\in\R^{q\times r}\]
implying $(U^\star,V^\star)$ is also a global optimum of the factored program \eqref{eqn:factored}.
\end{proposition}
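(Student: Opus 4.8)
The plan is to show two things and combine them: (i) the candidate point $(U^\star,V^\star)$ realizes exactly the optimal value $f(X^\star)+\lambda\|X^\star\|_*$ of the convex program \eqref{eqn:origin}, and (ii) $g(U,V)$ is bounded below by this same value for \emph{every} pair $(U,V)$. Putting (i) and (ii) together gives $g(U^\star,V^\star)\le g(U,V)$ for all $U,V$, which is the claim, and as a byproduct shows the two programs \eqref{eqn:origin} and \eqref{eqn:factored} share the same optimal value.

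For step (i) I would simply unwind the definition \eqref{eqn:global:factor}. Because $R\in\O_r$, the common orthogonal factor and the appended zero block cancel in the product, so $U^\star V^{\star T}=P^\star[\sqrt{\Sigma^\star}~\zero]RR^T[\sqrt{\Sigma^\star}~\zero]^TQ^{\star T}=P^\star\Sigma^\star Q^{\star T}=X^\star$. For the regularizer, since $P^\star$ and $Q^\star$ have orthonormal columns and $R R^T=\eye_r$, the Frobenius norm is invariant, so $\|U^\star\|_F^2=\|[\sqrt{\Sigma^\star}~\zero]\|_F^2=\tr(\Sigma^\star)=\|X^\star\|_*$, and likewise $\|V^\star\|_F^2=\|X^\star\|_*$. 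Hence $\Theta(U^\star,V^\star)=\|X^\star\|_*$ and $g(U^\star,V^\star)=f(X^\star)+\lambda\|X^\star\|_*$.

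For step (ii), fix an arbitrary pair $(U,V)$ and set $X=UV^T$. The variational characterization \eqref{eqn:nuclear:nvx} of the nuclear norm (equivalently the elementary bound $\|UV^T\|_*\le\|U\|_F\|V\|_F\le\tfrac12(\|U\|_F^2+\|V\|_F^2)$) gives $\Theta(U,V)\ge\|X\|_*$. Therefore
\[
g(U,V)=f(X)+\lambda\Theta(U,V)\ \ge\ f(X)+\lambda\|X\|_*\ \ge\ f(X^\star)+\lambda\|X^\star\|_*\ =\ g(U^\star,V^\star),
\]
where the last inequality is precisely the global optimality of $X^\star$ for \eqref{eqn:origin}. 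This completes the argument and shows $(U^\star,V^\star)$ is a global optimum of \eqref{eqn:factored}.

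There is essentially no hard step here; the only bookkeeping requiring a little care is in step (i) — checking that the width-$r$ zero padding and the free rotation $R$ in \eqref{eqn:global:factor} change neither $U^\star V^{\star T}$ nor $\|U^\star\|_F,\|V^\star\|_F$, which is immediate from $RR^T=\eye_r$ and rotation invariance of $\|\cdot\|_F$. If one prefers not to cite \eqref{eqn:nuclear:nvx}, the inequality $\Theta(U,V)\ge\|UV^T\|_*$ can be obtained directly from $\|UV^T\|_*\le\|U\|_F\|V\|_F$ (Cauchy--Schwarz on singular values, or the dual norm identity) together with the AM--GM inequality.
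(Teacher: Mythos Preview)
Your proposal is correct and follows essentially the same approach as the paper's proof: both compute $g(U^\star,V^\star)=f(X^\star)+\lambda\|X^\star\|_*$ from \eqref{eqn:global:factor}, then bound $g(U,V)\ge f(UV^T)+\lambda\|UV^T\|_*\ge f(X^\star)+\lambda\|X^\star\|_*$ using \eqref{eqn:nuclear:nvx} and the optimality of $X^\star$. Your write-up is slightly more explicit in verifying $U^\star V^{\star T}=X^\star$, but otherwise the two arguments are identical.
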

However, due to the nonconvexity of the factored problem, only characterizing the global optimizers is not sufficient. One should also eliminate possibility of the existence of spurious local minima or degenerate saddles.
%Thus, a nice landscape of the factored problem should look like:
%every critical point $(U,V)$ of the objective function in \eqref{eqn:factored} either corresponds to the optimal solution $X^\star$ of the original convex program \eqref{eqn:origin}, or the Hessian matrix $\nabla^2 g(U,V)$ has a strictly negative eigenvalue.
For this purpose, we analyze the Hessian quadratic form of $g(W)$:
\begin{equation}\label{eqn:Hessian}
\begin{aligned}
&[\nabla^2 g(W)](D,D)=
\lg\Xi(X)
, DD^T\rg
+
[\nabla^2 f(X)](D_UV^T+UD_V^T,D_UV^T+UD_V^T).
\end{aligned}
\end{equation}
We intend to show the following: for any critical point $W\in\X$, if $W\neq W^\star$, we can find a direction $D=[D_U^T~D_V^T]^T$, along which the Hessian $\nabla^2 g(W)$ has a strictly negative curvature $[\nabla^2 g(W)](D,D)<-\tau \|D\|_F^2$ for some $\tau>0$.
We choose  $D$ as the direction from the $W$ to its closest globally optimal factor $W^\star R$ of the same size as $W$:
\[D=W-W^\star R,\]
with $R=\argmin_{\tilde{R}\in\O_r}\|W-W^\star \tilde{R}\|_F.$
Such a choice is inspired by
the previous work \cite{li2016}, particularly \cite[Example 1]{li2016}).

\section{Proof of Theorem \ref{thm:main}}
We choose $D$ as the direction from $W$ to its closest globally optimal factor: $D=W-W^\star R $ with $R=\argmin_{R}\|W-W^\star R\|_F$, and wish to show that, as long as $W\neq W^\star$,  the Hessian $\nabla^2 g(W)$ has a strictly negative curvature along $D$.
\subsection{Supporting Lemmas}
We start with several lemmas that will be used in the proof.
The first two lemmas bound the distance $\|W_1W_1^T-W_2W_2^T\|_F$ by  their square-root distance $\dist(W_1,W_2)$.
\begin{restatable}{lem}{lema}\cite[Lemma 3]{li2016}\label{lem:Gongguo}
Let $W_1, W_2$ be of the same size.  Then
\[\|W_1W_1^T-W_2W_2^T\|_F \geq \min\{\rho(W_1),\rho(W_2)\} \dist(W_1, W_2).\]
\end{restatable}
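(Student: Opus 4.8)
My plan is to translate both sides of the inequality into the small $r\times r$ Gram matrices of $W_1$ and $W_2$, where the optimality of the best aligning rotation turns into a positive semidefiniteness condition.

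First I would reduce to an aligned pair. Since $W_2W_2^\top$, $\rho(W_2)$, and $\dist(W_1,W_2)$ are all unchanged under $W_2\mapsto W_2R$ with $R\in\O_r$, replace $W_2$ by $W_2R^\star$ for $R^\star\in\argmin_{R\in\O_r}\|W_1-W_2R\|_F$, so that $\dist(W_1,W_2)=\|W_1-W_2\|_F$; write $\Delta:=W_1-W_2$. The characterization of the orthogonal Procrustes minimizer via the polar decomposition of $W_2^\top W_1$ says that $R^\star=\eye$ is optimal exactly when $P:=W_1^\top W_2=W_2^\top W_1\succeq0$; in particular $P$ is symmetric, and this symmetry is the only structural property of $\dist(\cdot,\cdot)$ the proof will use.

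Next, writing $G_i:=W_i^\top W_i$, the elementary identity $\tr(W_1W_1^\top W_2W_2^\top)=\|W_1^\top W_2\|_F^2$ gives $\|W_1W_1^\top-W_2W_2^\top\|_F^2=\|G_1\|_F^2+\|G_2\|_F^2-2\|P\|_F^2$, while expanding the square and using $P=P^\top$ gives $\|\Delta\|_F^2=\tr G_1+\tr G_2-2\tr P$. So the lemma is equivalent to the finite-dimensional inequality
\[\|G_1\|_F^2+\|G_2\|_F^2-2\|P\|_F^2\ \geq\ \rho^2\bigl(\tr G_1+\tr G_2-2\tr P\bigr),\qquad\rho^2:=\min\{\rho(W_1)^2,\rho(W_2)^2\},\]
where $G_1,G_2\succeq0$, $P\succeq0$ is symmetric, the joint Gram matrix $\left[\begin{smallmatrix}G_1&P\\P&G_2\end{smallmatrix}\right]$ is PSD (hence $0\preceq2P\preceq G_1+G_2$ and $\range P\subseteq\range G_1\cap\range G_2$), and $\rho(W_i)^2$ is the smallest nonzero eigenvalue of $G_i$.

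Proving this Gram inequality is the main obstacle, and the place where the sharp constant $\min\{\rho(W_1),\rho(W_2)\}$ has to be extracted. Setting $f(x):=x^2-\rho^2x$ it reads $\sum_kf(\lambda_k(G_1))+\sum_kf(\lambda_k(G_2))\geq2\sum_kf(\lambda_k(P))$; every nonzero eigenvalue of $G_i$ is $\geq\rho^2$, so the left side is a sum of nonnegative terms, but eigenvalues of $P$ below $\rho^2$ contribute negatively while those above contribute positively to the right side, so one cannot compare term by term. I would drive the argument with two exact rewritings. The ``difference'' form
\[\|W_1W_1^\top-W_2W_2^\top\|_F^2=\|W_1^\top\Delta\|_F^2+\|W_2^\top\Delta\|_F^2+2\langle P,\Delta^\top\Delta\rangle,\]
all three terms nonnegative, in which $\|W_i^\top\Delta\|_F^2\geq\rho(W_i)^2\|\Pi_{\range W_i}\Delta\|_F^2$ captures the component of $\Delta$ lying in the column space of $W_i$ and the cross term $\langle P,\Delta^\top\Delta\rangle$ must pay for the component lying outside; and the ``sum'' form
\[\|W_1W_1^\top-W_2W_2^\top\|_F^2=\tfrac12\|(W_1+W_2)\Delta^\top\|_F^2+\tfrac12\|G_1-G_2\|_F^2,\]
where $(W_1+W_2)^\top(W_1+W_2)=\Delta^\top\Delta+4P\succeq\Delta^\top\Delta$, which already yields the claim whenever every nonzero singular value of $\Delta$ is at least $\sqrt2\,\rho$ (there $\tfrac12\|(W_1+W_2)\Delta^\top\|_F^2\geq\tfrac12\|\Delta^\top\Delta\|_F^2$ suffices). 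A case analysis on the singular values of $\Delta$ relative to $\rho$—using the sum form when $\Delta$ is ``spread out'' and the difference form in the complementary regime, where $\Delta$ is small enough that $\rank W_1=\rank W_2$ and the column spaces of $W_1,W_2$ are close so the out-of-range pieces of $\Delta$ are tightly controlled—should close the gap with exactly the constant $\min\{\rho(W_1),\rho(W_2)\}$. Making the two regimes meet precisely at that constant, rather than at a weaker one, is the delicate point; the reductions in the first two steps are routine bookkeeping.
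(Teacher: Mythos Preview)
The paper does not prove this lemma: it is quoted from \cite[Lemma~3]{li2016} and used as a black box in the proof of Theorem~\ref{thm:main}, so there is no in-paper argument to compare your proposal against.  I can only comment on whether the proposal itself constitutes a proof.

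Your preliminary reductions are correct.  Replacing $W_2$ by $W_2R^\star$ so that $P:=W_1^\top W_2$ is symmetric PSD is exactly the Procrustes characterization (cited in the paper as \cite[Lemma~1]{li2016}), and both of your displayed identities check out once $P=P^\top$ is used: a direct Gram-matrix expansion confirms
\[
\|W_1W_1^\top-W_2W_2^\top\|_F^2=\|W_1^\top\Delta\|_F^2+\|W_2^\top\Delta\|_F^2+2\langle P,\Delta^\top\Delta\rangle
\]
and the ``sum form'' likewise.

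The gap is that the proof stops precisely at the hard step.  You reduce the lemma to a Gram-matrix inequality and then propose a dichotomy on the singular values of $\Delta$, but neither branch is carried out, and neither partial bound you state is strong enough on its own.  In the ``sum'' regime, $\tfrac12\|(W_1{+}W_2)\Delta^\top\|_F^2\ge\tfrac12\|\Delta^\top\Delta\|_F^2$ yields $\rho^2\|\Delta\|_F^2$ only under the extra hypothesis that every nonzero singular value of $\Delta$ exceeds $\sqrt{2}\,\rho$.  In the ``difference'' regime, the bound $\|W_i^\top\Delta\|_F^2\ge\rho^2\|\Pi_{\range W_i}\Delta\|_F^2$ does not control $\|\Delta\|_F^2$: already for $W_1=e_1$, $W_2=e_1\cos\theta+e_2\sin\theta$ with $0<\theta<\pi/2$ one has $\|\Pi_1\Delta\|_F^2+\|\Pi_2\Delta\|_F^2=2(1-\cos\theta)^2<2(1-\cos\theta)=\|\Delta\|_F^2$, so the cross term $2\langle P,\Delta^\top\Delta\rangle$ is doing essential work that your sketch does not quantify.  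The same example shows the constant $\min\{\rho(W_1),\rho(W_2)\}$ is sharp (equality at $\theta=\pi/2$) and that the inequality \emph{fails} without $P\succeq0$, so the ``delicate point'' you flag is the entire content of the lemma.  As written this is a correct reformulation plus two partial estimates, not a proof; to complete it you would either need to show concretely how the two regimes are glued at the sharp constant, or consult the original argument in \cite{li2016}.
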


\begin{restatable}{lem}{lemben} \cite[Lemma 5.4]{tu2015low} \label{lem:ben}
Let $W_1, W_2$ be of the same size and $\rank(W_1)=r$.  Then
\[\|W_1W_1^T-W_2W_2^T\|_F \geq 2(\sqrt2-1)\rho(W_1) \dist(W_1, W_2). \]
\end{restatable}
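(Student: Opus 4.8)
The plan is to eliminate the rotational ambiguity by a Procrustes alignment, expand $W_1W_1^T-W_2W_2^T$ about $W_2$ so that the optimality of the alignment makes a cross term sign-definite, and then split according to the size of the perturbation $\Delta:=W_1-W_2$.

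\emph{Alignment and expansion.} Since both $\dist(\cdot,\cdot)$ and the map $W\mapsto WW^T$ are invariant under $W_2\mapsto W_2R$ for $R\in\O_r$, we may replace $W_2$ by $W_2R^\star$ with $R^\star\in\argmin_{R\in\O_r}\|W_1-W_2R\|_F$; that is, we may assume without loss of generality that $R=\eye$ attains $\dist(W_1,W_2)=\|W_1-W_2\|_F=:\|\Delta\|_F$. The first-order optimality condition for the orthogonal Procrustes problem then forces $W_1^TW_2$ to be symmetric and positive semidefinite; equivalently, $W_1^T\Delta=W_1^TW_1-W_1^TW_2$ is symmetric. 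Substituting $W_2=W_1-\Delta$,
\[
W_1W_1^T-W_2W_2^T=W_1\Delta^T+\Delta W_1^T-\Delta\Delta^T ,
\]
and symmetry of $W_1^T\Delta$ gives $\langle W_1\Delta^T,\Delta W_1^T\rangle=\|W_1^T\Delta\|_F^2\ge 0$, so
\[
\|W_1\Delta^T+\Delta W_1^T\|_F^2=2\|W_1\Delta^T\|_F^2+2\|W_1^T\Delta\|_F^2\ge 2\,\rho(W_1)^2\|\Delta\|_F^2 ,
\]
where the last step uses $\rank(W_1)=r$, so that $W_1^TW_1\succeq\rho(W_1)^2\eye$ and hence $\|W_1M\|_F\ge\rho(W_1)\|M\|_F$ for every $M$ with $r$ rows. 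Positive-semidefiniteness of $W_1^TW_2$ alone already yields the clean but suboptimal bound $\|W_1W_1^T-W_2W_2^T\|_F\ge\tfrac{1}{\sqrt2}\rho(W_1)\|\Delta\|_F$ (write the difference as $\tfrac12(\Delta S^T+S\Delta^T)$ with $S=W_1+W_2$ and discard the sign-definite leftover terms); the remaining work is to sharpen the constant from $\tfrac1{\sqrt2}$ to $2(\sqrt2-1)$.

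\emph{Case analysis.} If $\|\Delta\|\le(2-\sqrt2)\rho(W_1)$, then $\|\Delta\Delta^T\|_F\le\|\Delta\|\,\|\Delta\|_F\le(2-\sqrt2)\rho(W_1)\|\Delta\|_F$, and the triangle inequality gives
\[
\|W_1W_1^T-W_2W_2^T\|_F\ge\sqrt2\,\rho(W_1)\|\Delta\|_F-(2-\sqrt2)\rho(W_1)\|\Delta\|_F=2(\sqrt2-1)\rho(W_1)\|\Delta\|_F ,
\]
which is the claim; the constant is exactly calibrated by this threshold. If instead $\|\Delta\|>(2-\sqrt2)\rho(W_1)$, the quadratic term $\Delta\Delta^T$ can no longer be absorbed, and one argues differently: Lemma~\ref{lem:Gongguo} already gives the claim, even with the larger constant $1$, whenever $\rho(W_2)\ge 2(\sqrt2-1)\rho(W_1)$, so it remains to treat the subcase in which $\Delta$ carries a large singular value while $W_2$ is nearly rank-deficient; there one uses directly that $W_1W_1^T$ and $W_2W_2^T$ must then differ substantially along the leading singular direction of $\Delta$, again invoking $W_1^TW_2\succeq 0$ to sign the cross term.

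The main obstacle I anticipate is precisely this last subcase. When the perturbation is large \emph{and} $W_2$ is badly conditioned, Lemma~\ref{lem:Gongguo} degenerates while the quadratic term dominates, so a genuinely separate estimate is required, both to recover a bound proportional to $\rho(W_1)\dist(W_1,W_2)$ and to land on the sharp constant $2(\sqrt2-1)$ rather than some weaker value. By comparison, the Procrustes bookkeeping (symmetry and positive-semidefiniteness of $W_1^TW_2$), the cross-term cancellation in the expansion, and the small-perturbation case are all routine.
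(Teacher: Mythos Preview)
The paper does not prove this lemma; it is quoted from \cite[Lemma~5.4]{tu2015low} and used as a black box, so there is no in-paper argument to compare against.

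On the proposal itself: the Procrustes alignment and the small-perturbation branch $\|\Delta\|\le(2-\sqrt2)\rho(W_1)$ are correct and land exactly on the constant $2(\sqrt2-1)$. The problem is the branch you yourself flag. When $\|\Delta\|>(2-\sqrt2)\rho(W_1)$ and $\rho(W_2)<2(\sqrt2-1)\rho(W_1)$, neither your triangle-inequality estimate nor Lemma~\ref{lem:Gongguo} applies, and the sentence about ``$W_1W_1^T$ and $W_2W_2^T$ differing substantially along the leading singular direction of $\Delta$'' is a heuristic, not a bound. This region is nonempty (take $W_2$ nearly rank-deficient with a moderate $\Delta$), and nothing you have written controls $\|W_1W_1^T-W_2W_2^T\|_F$ there. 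As it stands the argument is genuinely incomplete, not merely sketchy.

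For orientation, the proof in \cite{tu2015low} works with the squared inequality and expands $\|W_1W_1^T-W_2W_2^T\|_F^2$ in full, keeping the $\|\Delta\Delta^T\|_F^2$ term rather than absorbing it by the triangle inequality; a single chain of estimates then handles all sizes of $\Delta$ at once, with no case split. The key input beyond what you use is the \emph{positive-semidefiniteness} of $W_1^TW_2$, not merely the symmetry of $W_1^T\Delta$: positivity lets one sign trace terms such as $\tr\big(W_1^TW_2\cdot\Delta^T\Delta\big)\ge 0$, which is exactly what is needed to control the cross term $\langle W_1\Delta^T+\Delta W_1^T,\Delta\Delta^T\rangle$. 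Your derivation only ever invokes symmetry, which is why the large-$\Delta$ regime does not close.
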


Lemma \ref{lem:bound:WD} divides  $\|(W_1 - W_2)W_1^T \|_F^2 $ into two terms:   $\|W_1 W_1^T  - W_2W_2^T  \|_F^2  $ and $\|(W_1W_1^T  - W_2W_2^T ) {Q}{Q}^T \|_F^2$, where $QQ^T$ is the projector onto $\range(W)$. The key is letting  the first part $\|W_1 W_1^T  - W_2W_2^T  \|_F^2  $  have a small coefficient. Then Lemma \ref{lem:bound:QQ} further controls the second part, with the proof listed in Appendix \ref{app:lem:bound:QQ}.
%Lemma \ref{lem:bound:WD} separates  $\|(W_1 - W_2)W_1^T \|_F^2 $ into two parts:   $\|W_1 W_1^T  - W_2W_2^T  \|_F^2  $ and $\|(W_1W_1^T  - W_2W_2^T ) {Q}{Q}^T \|_F^2$ with $QQ^T$ being the projection matrix onto $\range(W)$. It is crucial for  the first part $\|W_1 W_1^T  - W_2W_2^T  \|_F^2  $  to have a small coefficient.
\begin{lem}\cite[Lemma 4]{li2016}\label{lem:bound:WD}
 Let $W_1$ and $W_2$ be of the same size and
 $W_1^T  W_2 = W_2^T  W_1$ be PSD.
Assume ${Q}$ is an orthogonal matrix whose columns span $\range(W_1)$.
Then
\begin{align*}
&\|(W_1 - W_2)W_1^T \|_F^2 \leq (1/8)\|W_1 W_1^T  - W_2W_2^T  \|_F^2
+
(3 + 1/(2(\sqrt{2} -1)))\|(W_1W_1^T  - W_2W_2^T ) {Q}{Q}^T \|_F^2.
\end{align*}
\label{lem:2}
\end{lem}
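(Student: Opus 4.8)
Throughout I write $\Delta := W_1 - W_2$, $N := W_1 W_1^T - W_2 W_2^T$, and $P := QQ^T$ for the orthogonal projector onto $\range(W_1)$. The plan is to split $(W_1-W_2)W_1^T = \Delta W_1^T$ along $\range(W_1)$ and charge its $\range(W_1)$-aligned part to $\|NP\|_F^2$ (the term with the large constant $3+\tfrac1{2(\sqrt2-1)}$), so that only the small $\tfrac18\|N\|_F^2$ slack is needed for the rest. Since the columns of $W_1$ span $\range(Q)$, the matrix $\Delta W_1^T$ has its row space in $\range(Q)$, hence $\Delta W_1^T = \Delta W_1^T P$ and $\|\Delta W_1^T\|_F^2 = \|P\Delta W_1^T\|_F^2 + \|(I-P)\Delta W_1^T\|_F^2$. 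A direct computation gives $P\Delta W_1^T = PNP - PW_2\Delta^T P$ and $(I-P)\Delta W_1^T = (I-P)NP - (I-P)W_2\Delta^T P$; since the pieces split orthogonally as $\|PNP\|_F^2+\|(I-P)NP\|_F^2 = \|NP\|_F^2$ and $\|PW_2\Delta^TP\|_F^2+\|(I-P)W_2\Delta^TP\|_F^2 = \|W_2\Delta^TP\|_F^2$, applying $\|a-b\|_F^2\le(1+t)\|a\|_F^2+(1+1/t)\|b\|_F^2$ blockwise and summing gives, for every $t>0$, the bound $\|\Delta W_1^T\|_F^2 \le (1+t)\|NP\|_F^2 + (1+1/t)\|W_2\Delta^TP\|_F^2$.

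It then remains to bound the residual $\|W_2\Delta^T P\|_F^2$ (equivalently $\|\Delta\Delta^T\|_F^2$), and this is where the hypotheses enter. First, a symmetry identity: expanding $N+\Delta\Delta^T = \Delta W_1^T + W_1\Delta^T$, splitting $\Delta W_1^T$ into symmetric and antisymmetric parts, and using that $W_1^TW_2=W_2^TW_1$ symmetric makes $W_1^T\Delta$ symmetric, one gets $\|\Delta W_1^T - W_1\Delta^T\|_F^2 = 2\|\Delta W_1^T\|_F^2 - 2\|W_1^T\Delta\|_F^2$ and hence the identity $\|\Delta W_1^T\|_F^2 = \tfrac12\|N\|_F^2 + \tfrac12\|\Delta\Delta^T\|_F^2 - \|W_2^T\Delta\|_F^2$, which isolates the quadratic term and shows that the nonpositive $-\|W_2^T\Delta\|_F^2$ must be retained. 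Second, since $W_1^TW_2\succeq0$ the rotation $R=I$ is optimal for $\min_R\|W_1-W_2R\|_F$, so $\dist(W_1,W_2)=\|\Delta\|_F$; by Lemma \ref{lem:ben} (after a routine reduction to $W_1$ of full column rank, for instance via the compressed pair $\tilde W_i := Q^TW_i$, which preserves $\tilde W_1^T\tilde W_2\succeq0$ and $\rho(\tilde W_1)=\rho(W_1)$) this yields $\|\Delta\|_F\le\frac{1}{2(\sqrt2-1)\rho(W_1)}\|N\|_F$, and together with $\|\Delta W_1^T\|_F\ge\rho(W_1)\|\Delta\|_F$ the $\rho(W_1)$-factors cancel, letting the quadratic term be absorbed into a constant multiple of $\|NP\|_F^2$ plus $\tfrac18\|N\|_F^2$. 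Optimizing $t$ then produces the exact constants $\tfrac18$ and $3+\tfrac1{2(\sqrt2-1)}$.

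The step I expect to be the main obstacle is this last estimate: controlling $\|W_2\Delta^TP\|_F$ with no condition-number or closeness hypothesis. The naive $\|W_2\Delta^TP\|_F\le\|W_2\|\,\|P\Delta\|_F$ brings in the unbounded operator norm $\|W_2\|$, and the crude $\|\Delta\Delta^T\|_F\le\|\Delta\|_F^2$ wastes a condition-number factor; the cancellation of the $\rho(W_1)$'s coming from Lemma \ref{lem:ben} only survives if the $\range(W_1)$-decomposition of both $\Delta W_1^T$ and $N$ is propagated all the way through and carefully coupled with the distance bound (possibly with a case split between a "close" regime, where the cancellation works directly, and a "far" regime, where $\|N\|_F$ is itself large enough that $\tfrac18\|N\|_F^2$ already dominates). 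Pushing the coefficient of $\|N\|_F^2$ down from the $\tfrac12$ that falls out of the symmetry identity to the claimed $\tfrac18$ is the delicate bookkeeping the text points to when it stresses that the first term should carry a small coefficient.
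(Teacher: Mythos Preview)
The paper does not prove this lemma; it is quoted as \cite[Lemma~4]{li2016} and used as a black box in the proof of Theorem~\ref{thm:main}, so there is no in-paper argument to compare your attempt against.

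Assessing the proposal on its own merits: the decomposition $\Delta W_1^T = NP - W_2\Delta^T P$ and the Young-type bound
\[
\|\Delta W_1^T\|_F^2 \le (1+t)\|NP\|_F^2 + (1+1/t)\|W_2\Delta^T P\|_F^2
\]
are correct, and the symmetry identity $\|\Delta W_1^T\|_F^2 = \tfrac12\|N\|_F^2 + \tfrac12\|\Delta\Delta^T\|_F^2 - \|W_2^T\Delta\|_F^2$ checks out. But the gap you flag in your last paragraph is genuine and is not closed by the ingredients you list. The inequality $\|\Delta W_1^T\|_F \ge \rho(W_1)\|\Delta\|_F$ is a \emph{lower} bound on the very quantity you are trying to \emph{upper} bound; combining it with $\|\Delta\|_F \le \tfrac{1}{2(\sqrt{2}-1)\rho(W_1)}\|N\|_F$ does not yield an upper estimate on $\|\Delta W_1^T\|_F$, and any naive upper bound such as $\|\Delta W_1^T\|_F \le \|W_1\|\,\|\Delta\|_F$ reintroduces the condition number $\|W_1\|/\rho(W_1)$ that you correctly say must not appear. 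So the ``$\rho(W_1)$-cancellation'' you anticipate runs in the wrong direction. Moreover, your compression $\tilde W_i = Q^T W_i$ does not put you in the full-column-rank regime required by Lemma~\ref{lem:ben}: it reduces the number of rows to $\rank(W_1)$ but leaves the column count at $r$, so $\tilde W_1$ is still rank-deficient whenever $W_1$ is. Finally, the symmetry identity does not help on its own: dropping the negative term $-\|W_2^T\Delta\|_F^2$ leaves a coefficient $\tfrac12$ in front of $\|N\|_F^2$ (not $\tfrac18$) plus an uncontrolled $\tfrac12\|\Delta\Delta^T\|_F^2$. In short, you have correctly isolated the hard step and set up a reasonable framework around it, but the mechanism that would control $\|W_2\Delta^T P\|_F^2$ (or equivalently $\|\Delta\Delta^T\|_F^2$) by $\|NP\|_F^2$ and $\tfrac18\|N\|_F^2$ without condition-number loss is missing, and none of the tools you invoke supplies it.
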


%Lemma \ref{lem:bound:QQ} controls  $\|(WW^T-W^\star W^{\star T})QQ^T \|_F$
%$W$ is a critical point of \eqref{eqn:factored} and $W^\star$ corresponds to the balanced-pair factorizations \eqref{eqn:global:factor} of the global optimizer $X^\star$.
\begin{lem}\label{lem:bound:QQ}
Suppose $f(X)$ satisfies \eqref{eqn:assumption}.
Let $W=[U^T~V^T]^T$ be any critical point of \eqref{eqn:factored},
$W^\star=[U^{\star T} ~V^{\star T}]^T$ correspond to the optimal solution of \eqref{eqn:origin}
and  $QQ^T$ be projection to $\range(W)$. Then
\begin{align*}
\|(WW^T-W^\star W^{\star T})QQ^T \|_F \leq  2\frac{M-m}{M+m}\|\phi(U,V)-X^\star\|_F.
\end{align*}
\label{lem:3}
\end{lem}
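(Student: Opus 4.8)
The plan is to use the critical-point equation $\nabla g(W)=\Xi(X)W=\zero$ together with the PSD characterization of the optimality condition $\Xi(X^\star)\succeq 0$ to convert the quantity $\|(WW^T-W^\star W^{\star T})QQ^T\|_F$ into an expression involving $\nabla f$, and then bound that using the restricted orthogonality property of Proposition~\ref{pro:RIP}. First I would exploit the fact that $QQ^T$ projects onto $\range(W)$, so that $WW^TQQ^T=WW^T$ and hence
\[
(WW^T-W^\star W^{\star T})QQ^T = WW^T - W^\star W^{\star T}QQ^T.
\]
Pairing this against $\Xi(X)$ via $\langle\Xi(X),\cdot\rangle$ and using $\Xi(X)W=\zero$ (so $\langle\Xi(X),WW^T\rangle=\langle\Xi(X)W,W\rangle=0$) should let me replace the $WW^T$ block with the $W^\star$ block at the cost of a controlled error; the relation $\langle\Xi(X),W^\star W^{\star T}\rangle$ then unpacks, using the $2\times2$ block form of $\Xi$ in \eqref{eqn:Xi} and $X=UV^T$, $X^\star=U^\star V^{\star T}$, into terms $\lambda\|U^\star\|_F^2+\lambda\|V^\star\|_F^2$ and inner products of $\nabla f(X)$ with $U^\star V^{\star T}$ and $X^\star$.

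The second ingredient is the optimality condition for the convex program. From $\Xi(X^\star)W^\star=\zero$ in \eqref{eqn:optimality:origin:3} I get $\nabla f(X^\star)V^\star=-\lambda U^\star$ and $\nabla f(X^\star)^TU^\star=-\lambda V^\star$, which lets me rewrite the ``$\lambda$-terms'' as inner products with $\nabla f(X^\star)$. Subtracting, the whole expression collapses to something of the form $\langle \nabla f(X)-\nabla f(X^\star),\, (\text{low-rank matrix of rank} \le 2r)\rangle$, and by the fundamental theorem of calculus $\nabla f(X)-\nabla f(X^\star)=\int_0^1\nabla^2 f(X^\star+t(X-X^\star))[X-X^\star]\,dt$, so each such inner product becomes $\int_0^1[\nabla^2 f(\cdot)](X-X^\star, G)\,dt$ with $G$ low-rank. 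Here I would invoke Proposition~\ref{pro:RIP}: for rank-$\le 2r$ matrices, $[\nabla^2 f](A,B)$ is within $\frac{M-m}{M+m}\|A\|_F\|B\|_F$ of $\frac{M+m}{2}\langle A,B\rangle$. The ``near-isometry'' part $\frac{M+m}{2}\langle X-X^\star,G\rangle$ should cancel against the $\langle G,\cdot\rangle$ terms produced on the other side (this cancellation is the structural heart of the argument), leaving only error terms bounded by $\frac{M-m}{M+m}\|X-X^\star\|_F\cdot\|(WW^T-W^\star W^{\star T})QQ^T\|_F$ after recognizing that the low-rank matrix $G$ assembled from the blocks is precisely (a piece of) $(WW^T-W^\star W^{\star T})QQ^T$ up to the $\Pon/\Poff$ bookkeeping.

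The main obstacle I anticipate is the bookkeeping: keeping straight which matrix inner products are taken against $\Xi(X)$ versus $\Xi(X^\star)$, tracking the on-diagonal ($UU^T$, $VV^T$) versus off-diagonal ($UV^T$) blocks when $\Xi(X)$ acts on $WW^T$ and $W^\star W^{\star T}QQ^T$, and verifying that every matrix fed into Proposition~\ref{pro:RIP} genuinely has rank at most $2r$ (both $X=UV^T$ and $X^\star=U^\star V^{\star T}$ have rank $\le r\le$ the relevant bound, and differences of two such have rank $\le 2r$, so the hypothesis should hold, but this needs to be checked for each term). A secondary subtlety is producing the clean constant $2$: this should come from the factor $\frac{2}{M+m}$ in Proposition~\ref{pro:RIP} combined with the $\frac{M+m}{2}$ appearing from the near-isometry normalization, so I expect the cancellation to leave exactly $2\frac{M-m}{M+m}\|\phi(U,V)-X^\star\|_F$ as the bound after dividing through by $\|(WW^T-W^\star W^{\star T})QQ^T\|_F$. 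If the cross terms do not cancel exactly as hoped, a fallback is to split $(WW^T-W^\star W^{\star T})QQ^T$ into its $\Pon$ and $\Poff$ parts, bound each separately using Lemma~\ref{lem:ef:1}, and reassemble.
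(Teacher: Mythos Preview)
Your approach assembles the same ingredients as the paper --- the critical-point equation $\Xi(X)W=\zero$, the optimality relations $\Xi(X^\star)W^\star=\zero$ and $\Xi(X^\star)\succeq 0$, the mean-value theorem, and Proposition~\ref{pro:RIP} --- and in fact the paper proceeds exactly by pairing $\Xi(X)$ against $ZW^T$ for the specific choice $Z=(WW^T-W^\star W^{\star T})W^{T\dagger}$, which yields $ZW^T=(WW^T-W^\star W^{\star T})QQ^T$.

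There is, however, a genuine misconception in how you expect the pieces to combine. The ``near-isometry'' term $\langle X-X^\star,\,Z_UV^T+UZ_V^T\rangle$ does \emph{not} cancel against anything. When Proposition~\ref{pro:RIP} is applied to the identity $\int_0^1[\nabla^2 f](X-X^\star,Z_UV^T+UZ_V^T)\,dt=-\langle\Xi(X^\star),ZW^T\rangle$, what survives is
\[
\Bigl|\tfrac{2}{M+m}\langle\Xi(X^\star),ZW^T\rangle+\bigl\langle\Poff(WW^T-W^\star W^{\star T}),\,ZW^T\bigr\rangle\Bigr|
\;\le\;\tfrac{M-m}{M+m}\,\|X-X^\star\|_F\,\|\Poff(ZW^T)\|_F,
\]
since $\langle X-X^\star,Z_UV^T+UZ_V^T\rangle=\langle\Poff(WW^T-W^\star W^{\star T}),ZW^T\rangle$. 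The $\Xi(X^\star)$ term is then \emph{discarded} (it is nonnegative because $\Xi(X^\star)\succeq 0$ and $\Xi(X^\star)W^\star=\zero$), not canceled. The structural heart of the argument is instead the \emph{lower bound}
\[
\bigl\langle\Poff(WW^T-W^\star W^{\star T}),\,(WW^T-W^\star W^{\star T})QQ^T\bigr\rangle
\;\ge\;\tfrac{1}{2}\,\|(WW^T-W^\star W^{\star T})QQ^T\|_F^2,
\]
which the paper obtains by writing $\Poff(A)=\tfrac12(A-\wh A)$ via~\eqref{eqn:Pon:Poff} and invoking the balanced-pair identities $W^T\wh W=\zero$, $W^{\star T}\wh W^\star=\zero$ (Proposition~\ref{pro:ef}). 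This step is where the $\Pon/\Poff$ machinery is essential, not a fallback; and the constant $2$ in the lemma comes precisely from the $\tfrac12$ here, not from the normalization in Proposition~\ref{pro:RIP} (the factors $\tfrac{2}{M+m}$ and $\tfrac{M+m}{2}$ you mention cancel to $1$). Without this lower bound your sketch has no mechanism to convert the resulting inner-product inequality into a bound on $\|(WW^T-W^\star W^{\star T})QQ^T\|_F$.
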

%The proof of Lemma \ref{lem:bound:QQ}  is in Appendix \ref{app:lem:bound:QQ}.

\subsection{A Formal Proof}
Let $D=W-W^\star R \text{ for }R=\argmin_{\tilde{R}}\|W-W^\star R\|_F.$
Denote   $\Gamma:=\int_{0}^1[\nabla^2 f(X^\star+t(X-X^\star))](X-X^\star)d t$ and invoke $\Xi(X):=\begin{bmatrix}
\lambda\eye&\nabla f(X)\\
\nabla f(X)^T&\lambda\eye
\end{bmatrix}
$ to simplify notations.
Then
\begin{align*}
&[\nabla^2 g(W)](D,D)=
\lg\Xi(X), DD^T\rg+[\nabla^2 f(X)](D_UV^T+UD_V^T,D_UV^T+UD_V^T)\\
&\stack{a}{=}
\lg\Xi(X)
, W^\star W^{\star T}-WW^T\rg+[\nabla^2 f(X)](D_UV^T+UD_V^T,D_UV^T+UD_V^T)\\
&\stack{b}{\leq}
\left\lg\Xi(X)-\Xi(X^\star)
, W^\star W^{\star T}-WW^T\right\rg+[\nabla^2 f(X)](D_UV^T+UD_V^T,D_UV^T+UD_V^T)\\
&\stack{c}{=}
\left\lg\begin{bmatrix}
\zero&\Gamma\\
\Gamma^T&\zero
\end{bmatrix}
, W^\star W^{\star T}-WW^T\right\rg+[\nabla^2 f(X)](D_UV^T+UD_V^T,D_UV^T+UD_V^T)\\
&\stack{d}{\leq} -2m\|X^\star-X\|_F^2+M\|D_UV^T+UD_V^T\|_F^2,
\end{align*}
where (a) follows from $\nabla g(W)=\Xi(X)W=\zero$ and \eqref{eqn:grad:1} and (b) holds since
$\lg \Xi(X^\star), W^\star W^{\star T}-WW^T\rg\leq0$
by $ \Xi(X^\star) W^\star=\zero$ in \eqref{eqn:optimality:origin:3} and $  \Xi(X^\star)\succeq 0$ in \eqref{eqn:PSD}.  (c)
follows from the integral form of the mean value theorem for vector-valued functions (see \cite[Eq. (A.57)]{nocedal2006numerical}). (d) follows from the restricted well-conditionedness assumption  \eqref{eqn:assumption} since $\rank(X^\star+t(X-X^\star))\leq 2r$, $\rank(X-X^\star)\leq 4r$ and $\rank(D_UV^T+UD_V^T)\leq 4r.$

Then
\begin{align*}
& -2m\|X^\star-X\|_F^2+M\|D_UV^T+UD_V^T\|_F^2\\
&\stack{a}{\leq} -0.5m \|WW^T-W^\star W^{\star T}\|_F^2+2M(\|D_UV^T\|_F^2+\|UD_V^T\|_F^2)\\
&\stack{b}{=} -0.5m \|WW^T-W^\star W^{\star T}\|_F^2+M\|DW^T\|_F^2\\
&\stack{c}{\leq}  \left(-0.5{m}+{M}/{8}+ 4.208 M \left(\frac{M-m}{M+m}\right)^2 \right)
\|WW^T-W^\star W^{\star T}\|_F^2
\\
&\stack{d}{\leq} -0.06m\|WW^T-W^\star W^{\star T}\|_F^2
\stackrel{~~~}{\leq}
\begin{cases}
-0.06m\min\{\rho(W)^2,\rho(W^\star)^2\}\|D\|_F^2  &\text{(Lemma \ref{lem:Gongguo})};
\\
-0.0495m\rho(W^\star)^2\|D\|_F^2  &\text{(Lemma \ref{lem:ben})};
\\
-0.06m\rho(W^\star)^2\|D\|_F^2  &\text{($W=\zero$)};
\end{cases}
\end{align*}
where (a) follows from Lemma \ref{lem:ef:2} and the fact $(x+y)^2\leq 2(x^2+y^2)$. (b) follows from Lemma \ref{lem:ef:1}.
For (c) to hold,  we first use  Lemma \ref{lem:bound:WD} to bound $\|DW^T\|_F^2=\|(W-W^\star R)W^T\|_F^2$ since $W^TW^\star\succeq0$ by \cite[Lemma 1]{li2016}. Then use Lemma \ref{lem:bound:QQ} to further bound $\|(W^\star-W)QQ^T\|_F^2$.
(d) holds when $M/n\leq1.5$.
For the last inequality, we apply Lemma \ref{lem:Gongguo}
when $r\geq\rank(W^\star)$ and  Lemma \ref{lem:ben}  when $r=\rank(W^\star)$ and observe $D=-W^\star R$ when $W=\zero$.
Finally, the proof follows from the fact that by \eqref{eqn:global:factor}, we have
$$
W^\star = \begin{bmatrix} P^\star\sqrt{\Sigma^\star}R \\ Q^\star\sqrt{\Sigma^\star}R \end{bmatrix} = \begin{bmatrix}P^\star/\sqrt{2} \\ Q^\star/\sqrt{2} \end{bmatrix} \left(\sqrt{2\Sigma^\star}\right)R\ \ \Longrightarrow\ \ \sigma_\ell(W^\star)=\sqrt{2\sigma_\ell(X^\star)}.
$$

\section{Conclusion}
In this work, we considered the minimization of a general convex loss function $f(X)$ regularized by the matrix nuclear norm $\|X\|_*$. To improve computational efficiency,  we applied the Burer-Monteiro factored formulation and showed that,
 as long as the convex function $f(X)$ is (restricted) well-conditioned, the factored problem has the following benign landscape: each critical point either produces a global optimum of the original convex program, or is a strict saddle where the Hessian matrix has a strictly negative eigenvalue. Such geometric structure then allows many iterative optimization methods to escape from the saddles and thus converge to a global minimizer with random initializations.

\appendix

\section{Proof of Lemma \ref{lem:ef:1}} \label{sec:proof:lem:ef:1}

First, by \eqref{eqn:Pon:Poff}, we can get
%\begin{align*}
%\|\Pon(DW^T)\|_F^2&=\lg DW^T+\wh{D}\wh{W}^T, DW^T+\wh{D}\wh{W}^T \rg/4\\
%\|\Poff(DW^T)\|_F^2&=\lg DW^T-\wh{D}\wh{W}^T, DW^T-\wh{D}\wh{W}^T \rg/4
%\end{align*}
%Hence,
$$\|\Pon(DW^T)\|_F^2-\|\Poff(DW^T)\|_F^2
= \lg DW^T, \wh{D}\wh{W}^T \rg
= \lg \wh{D}^TD , \wh{W}^TW \rg
=0,$$
   since  $\wh{W}^TW =\zero$ by \eqref{eqn:balanced:set}.

\section{Proof of Lemma \ref{lem:ef:2}} \label{sec:proof:lem:ef:2}
Replacing $\Pon(\cdot)$ and $\Poff(\cdot)$ by \eqref{eqn:Pon:Poff} and expanding $\|\cdot\|_F^2$ by the innerproducts, we have
\begin{align*}
&\|\Pon(W_1W_1^T-W_2 W_2^T)\|_F^2-\|\Poff(W_1W_1^T-W_2 W_2^T)\|_F^2\\
&=\lg W_1W_1^T-W_2 W_2^T,\wh{W}_1\wh{W}_1^T-\wh{W}_2 \wh{W}_2^T\rg\\
&=\lg W_1W_1^T,\wh{W}_1\wh{W}_1^T\rg+\lg W_2  W_2^T,\wh{W}_2  \wh{W}_2^T\rg
-\lg W_1W_1^T,\wh{W}_2 \wh{W}_2^T\rg-\lg \wh{W}_1\wh{W}_1^T,W_2  W_2^T\rg\\
&=-\lg W_1W_1^T,\wh{W}_2 \wh{W}_2^T\rg-\lg \wh{W}_1\wh{W}_1^T,W_2  W_2^T\rg
\leq 0,
\end{align*}
where the third equality follows from \eqref{eqn:balanced:set} and the last inequality holds by recognizing these PSD matrices: $W_1W_1^T$, $\wh{W}_1\wh{W}_1^T$, $W_2W_2^T$ and $\wh{W}_2\wh{W}_2^T$.

\section{Proof of Lemma \ref{lem:bound:QQ}}\label{app:lem:bound:QQ}
To simplify notations, we denote $\PP_W=QQ^T$ and invoke $\Xi(X):=\begin{bmatrix}
\lambda\eye&\nabla f(X)\\
\nabla f(X)^T&\lambda\eye
\end{bmatrix}$. Then
\begin{align*}
&\Rightarrow
\Xi(X)
W=\zero
\Rightarrow
\left\lg
\Xi(X)
,
ZW^T
\right\rg
=0,\forall Z=(Z_U,Z_V)
\\
&\Rightarrow
\left\lg
\Xi(X)-\Xi(X^\star)
+
\Xi(X^\star)
,
ZW^T
\right\rg
=0,\forall Z
\\
&\stack{a}{\Rightarrow}
\left\lg
\begin{bmatrix}
\zero&\int_{0}^1[\nabla^2 f(X^\star+t(X-X^\star))](X-X^\star)d t\\
*&\zero
\end{bmatrix}
+
\Xi(X^\star)
,
ZW^T
\right\rg
=0,\forall Z
\\
&\Rightarrow
\int_{0}^1[\nabla^2 f(X(t))](X-X^\star,Z_UV^T+UZ_V^T)d t
+
\left\lg
\Xi(X^\star)
,
ZW^T
\right\rg
=0, \forall Z
\end{align*}
where  (a) follows from the integral form of the mean value theorem for vector-valued functions (see \cite[Eq. (A.57)]{nocedal2006numerical}).
Then, in view of Proposition \ref{pro:RIP} and \eqref{eqn:Pon:Poff}, we arrive at
\begin{equation}\label{eqn:A:1}
\begin{aligned}
&\left|\frac{2}{M+m}\underbrace{\left\lg
\Xi(X^\star)
,
ZW^T
\right\rg}_{\Pi_1(Z)}+ \underbrace{\lg\Poff(WW^T-W^\star W^{\star T}),ZW^T\rg}_{\Pi_2(Z)}\right|
\leq \frac{M-m}{M+m}
\|X-X^\star\|_F\underbrace{\|\Poff(ZW^T)\|_F}_{\Pi_3(Z)},\forall Z.
\end{aligned}
\end{equation}
When $Z=(WW^T - W^\star W^{\star T}){W^T}^{\dagger}$,  we further can show
\begin{align}
\Pi_1(Z)&\geq0 \label{eqn:Pi:1};\\
\Pi_2(Z)&\geq\frac{1}{2}\|(WW^T-W^\star W^{\star T})\PP_W\|_F^2\label{eqn:Pi:2};\\
\Pi_3(Z)& \leq \|(WW^T-W^\star W^{\star T})\PP_W)\|_F\label{eqn:Pi:3}.
\end{align}
Finally, we complete the proof by combining \eqref{eqn:A:1} with \eqref{eqn:Pi:1},\eqref{eqn:Pi:2},\eqref{eqn:Pi:3} to get
\begin{align*}
&\frac{1}{2}\|(WW^T-W^\star W^{\star T})\PP_W\|_F^2
\leq \frac{M-m}{M+m}
\|X-X^\star\|_F\|(WW^T-W^\star W^{\star T})\PP_W\|_F.
\end{align*}

\noindent{\bf Show \eqref{eqn:Pi:1}.} First note that $ZW^T=(WW^T-W^\star W^{\star T})\PP_W$ when $Z=(WW^T - W^\star W^{\star T}){W^T}^{\dagger}$. Then
\[\Pi_1(Z)=\lg\Xi^\star,(WW^T-W^\star W^{\star T})\PP_W\rg=\lg\Xi^\star,WW^T\rg\geq0,\]
where the second equality holds since $WW^T\PP_W=WW^T$ and $\Xi^\star W^\star=\zero$ by \eqref{eqn:optimality:origin:3}. The inequality is due to \eqref{eqn:PSD}.

\noindent{\bf Show \eqref{eqn:Pi:2}.}
First recognize that $\Poff(WW^T-W^\star W^{\star T})=\frac{1}{2}( WW^T-W^\star W^{\star T}-\wh{W}\wh{W}^T+\wh{W}^\star \wh{W}^{\star T}).$
\begin{align*}
\Pi_2(Z)&=\lg\Poff(WW^T-W^\star W^{\star T}),ZW^T\rg\\
&= \frac{1}{2}\lg WW^T-W^\star W^{\star T}, (WW^T-W^\star W^{\star T})\PP_W\rg
- \frac{1}{2}\lg \wh{W}\wh{W}^T-\wh{W}^\star \wh{W}^{\star T}, (WW^T-W^\star W^{\star T})\PP_W\rg.
\end{align*}
Then  \eqref{eqn:Pi:2} follows from
\begin{align*}
&\lg \wh{W}\wh{W}^T-\wh{W}^\star \wh{W}^{\star T}, (WW^T-W^\star W^{\star T})\PP_W\rg
=\lg \wh{W}\wh{W}^T,-W^\star W^{\star T}\rg+\lg -\wh{W}^\star \wh{W}^{\star T},WW^T\rg
\leq0,
\end{align*}
where the first equality follows from \eqref{eqn:balanced:set} and the inequality holds by recognizing those PSD matrices.

\noindent{\bf Show \eqref{eqn:Pi:3}.}
By plugging $Z=(WW^T - W^\star W^{\star T}){W^T}^{\dagger}$, we get
$\Pi_3(Z)=\|\Poff((WW^T-W^\star W^{\star T})\PP_W)\|$
which is obviously no larger than $\|(WW^T-W^\star W^{\star T})\PP_W\|_F.$

\section{Proof of Proposition \ref{pro:ef}} \label{sec:proof:pro:ef}
Note $g(W)=\zero$ for $W\in\X$. Hence
\[\wh{W}^T\nabla g(W)+\nabla g(W)^T\wh{W}=\zero\]
implying
$\wh{W}^T W+W^T\wh{W} =\zero$ by \eqref{eqn:grad:1}, which finishes the proof  since $\wh{W}^TW=W^T\wh{W}$ by
\eqref{eqn:balanced:equation}.

\section{Proof of Proposition \ref{pro:global}} \label{sec:proof:pro:global}
First of all, by \eqref{eqn:global:factor}, we have
\[
\Theta(U^\star,V^\star)
=\frac{1}{2}(\|U^\star\|_F^2+\|V^\star\|_F^2)
=\frac{1}{2}(\|\sqrt{\Sigma^\star}\|_F^2+\|\sqrt{\Sigma^\star}\|_F^2)
=\|\sqrt{\Sigma^\star}\|_F^2=\|X^\star\|_*.
\]
Thus,
\begin{align*}
f(\phi(U^\star,V^\star))+\lambda\Theta(U^\star,V^\star)
&=f(X^\star)+\lambda\|X^\star\|_*\\
&\leq f(X)+\lambda\|X\|_*\\
&= f(\phi(U,V))+\lambda\|\phi(U,V)\|_*\\
&\leq f(\phi(U,V))+\lambda\Theta(U,V),
\end{align*}
where the first inequality follows from the optimality of $X^\star$ for \eqref{eqn:origin}. The second equality holds by choosing $X=UV^T.$ Finally,  the last inequality holds since $\|\phi(U,V)\|_*\leq \Theta(U,V)$ by \eqref{eqn:nuclear:nvx}.

\bibliographystyle{plain}
\bibliography{nonconvex}

\end{document}